\title{\textbf{A cutting surface algorithm for semi-infinite convex programming with an application to moment robust optimization}}
\date{\today}
\author{Sanjay Mehrotra\footnote{Northwestern University, Department of Industrial Engineering and Management Sciences, Evanston, IL, USA. E-mail: mehrotra@iems.northwestern.edu} \and D{\'a}vid Papp\footnote{Northwestern University, Department of Industrial Engineering and Management Sciences, Evanston, IL, USA. Email: dpapp@iems.northwestern.edu. Currently at Harvard Medical School and Massachusetts General Hospital.}}
\newcommand{\real}{\mathbb{R}}
\newcommand{\T}{\mathrm{T}} 
\newcommand{\defeq}{\ensuremath{\overset{\mathrm{def}}{=}}}
\DeclareMathOperator*{\argmax}{arg\,max}
\DeclareMathOperator*{\dom}{dom}
\renewcommand{\epsilon}{\varepsilon}
\newcommand{\deletethis}[1]{}
\theoremstyle{plain}
\newtheorem{definition}{Definition}
\newtheorem{lemma}[definition]{Lemma}
\newtheorem{theorem}[definition]{Theorem}
\theoremstyle{definition}
\newtheorem{example}{Example}
\newtheorem{algorithm}{Algorithm}
\newtheorem{assumption}{Assumption}
\begin{document}
\maketitle

\begin{abstract}
We present and analyze a central cutting surface algorithm for general semi-infinite convex optimization problems, and use it to develop a novel algorithm for distributionally robust optimization problems in which the uncertainty set consists of probability distributions with given bounds on their moments. Moments of arbitrary order, as well as non-polynomial moments can be included in the formulation. We show that this gives rise to a hierarchy of optimization problems with decreasing levels of risk-aversion, with classic robust optimization at one end of the spectrum, and stochastic programming at the other. Although our primary motivation is to solve distributionally robust optimization problems with moment uncertainty, the cutting surface method for general semi-infinite convex programs is also of independent interest. The proposed method is applicable to problems with non-differentiable semi-infinite constraints indexed by an infinite-dimensional index set. Examples comparing the cutting surface algorithm to the central cutting plane algorithm of Kortanek and No demonstrate the potential of our algorithm even in the solution of traditional semi-infinite convex programming problems, whose constraints are differentiable, and are indexed by an index set of low dimension. After the rate of convergence analysis of the cutting surface algorithm, we extend the authors' moment matching scenario generation algorithm to a probabilistic algorithm that finds optimal probability distributions subject to moment constraints. The combination of this distribution optimization method and the central cutting surface algorithm yields a solution to a family of distributionally robust optimization problems that are considerably more general than the ones proposed to date.

\smallskip
\noindent \textbf{Keywords:} semi-infinite programming, robust optimization, distributionally robust optimization, stochastic programming, moment matching, column generation, cutting surface methods, cutting plane methods, moment problem
\end{abstract}

\section{Introduction}
We present a novel cutting surface algorithm for general semi-infinite convex optimization problems (SICPs) that is applicable under milder than usual assumptions on the problem formulation, extending an algorithm of \citet{KortanekNo1993}. Our primary motivation is to solve a large class of distributionally robust optimization problems that can be posed as SICPs with convex but not necessarily differentiable constraints indexed by an uncountably infinite dimensional set of probability distributions. In the remainder of this section we introduce the SICPs considered; the connection to robust optimization is discussed in Section \ref{sec:DRO}.

We consider a general semi-infinite convex optimization problem of the following form:
\begin{equation}
\begin{split}
\textrm{minimize}   &\quad x_0\\
\textrm{subject to} &\quad g(x,t)\leq 0\quad \forall\,t\in T\\
                    &\quad x \in X
\end{split}\tag{SICP}\label{eq:SICP}
\end{equation}
with respect to the decision variables $x$ (whose first coordinate is denoted by $x_0$), where the sets $X$ and $T$, and the function $g\colon X\times T \mapsto \real$ satisfy the following conditions:
\begin{assumption}\label{ass:1}\mbox{}
\begin{enumerate}
\item the set $X\subseteq\real^n$ is convex, closed and bounded;
\item there exists a \emph{Slater point} $\bar{x}$ and $\eta>0$ satisfying $\bar{x}\in X$ and $g(\bar{x},t) \leq -\eta$ for every $t\in T$;
\item the function $g(\cdot,t)$ is convex and subdifferentiable for every $t\in T$; moreover, these subdifferentials are uniformly bounded: there exists a $B>0$ such that for every $x\in X$ and $t \in T$, every subgradient $d\in\partial g_x(x,t)$ satisfies $\|d\|\leq B$.
\end{enumerate}
\end{assumption}

Note that having one of the components of the variable vector $x$ as an objective instead of a general convex objective function is without loss of generality; we opted for this form because it simplifies both the description of our algorithm and the convergence analysis. Similarly, we can assume without loss of generality that $\eta=1$ in the assumption; otherwise we can simply replace $g$ by $g/\eta$. (This will, of course, change the value of $B$ as well.) We also remark that $T$ is not required to be either convex or finite dimensional, nor is the differentiability of $g$, or the convexity or concavity of $g$ in its second argument necessary.

The minimum of \eqref{eq:SICP} is attained, since its feasible set is closed, nonempty, and bounded, and its objective function is continuous. Our aim is to find an optimal solution to \eqref{eq:SICP} within $\epsilon$ accuracy, by which we mean the following.

We say that $x \in X$ is \emph{$\epsilon$-feasible} if $g(x,t)\leq\epsilon$ for every $t \in T$, and we say that a point $x_\epsilon^*\in X$ is an \emph{$\epsilon$-optimal solution} to \eqref{eq:SICP} if it is $\epsilon$-feasible and
\[(x_\epsilon^*)_0 \leq x^*_0 \defeq \min \{x_0\,|\,x \in X,\; g(x,t)\leq 0\, \forall\,t\in T \}.\]

We make one final assumption, on our ability to detect the approximate infeasibility of candidate solutions to \eqref{eq:SICP} within a prescribed error $\epsilon\geq 0$.
\begin{assumption}\label{ass:2}For every point $x\in X$ that is not $\epsilon$-feasible, we can find in finite time a $t\in T$ satisfying $g(x,t)>0$.
\end{assumption}

It is \emph{not} required that we can find the most violated inequality $g(x,t)>0$ or the corresponding $\argmax_{t\in T}\{g(x,t)\}$ for any $x$. We only stipulate that we shall be able to find \emph{a} violated inequality, provided that some inequality is violated by more than $\epsilon$. 

Assumption \ref{ass:2} is slightly weaker than the more ``natural'' assumption of having an oracle that either returns a $t\in T$ satisfying $g(x,t)>\epsilon$ or concludes that $g(x,t)\leq \epsilon$ for all $t\in T$ for some fixed $\epsilon\geq 0$. Our form is motivated by the moment robust optimization application. As we shall see in Section \ref{sec:CG}, the cut generation oracle for moment robust optimization needs to solve an (infinite dimensional) distribution optimization problem that is difficult to solve exactly; however, the method presented in that section guarantees that if a sufficiently violated constraint exists, then \emph{some} violated constraint is found in \textit{a priori} bounded time.

Several algorithms have been proposed to solve semi-infinite linear and semi-infinite convex programming problems, including cutting plane methods, 
local reduction methods, exchange methods, and homotopy methods. See, for example, \citep{LopezStill2007} for a recent review on semi-infinite convex programming, including an overview on numerical methods with plenty of references. Most existing algorithms consider only linear problems, appealing to the fact that the general convex problem \eqref{eq:SICP} is equivalent to the semi-infinite linear programming problem
\begin{equation}
\begin{split}
\textrm{minimize}   &\quad x_0\\
\textrm{subject to} &\quad u^\T x - g_t^*(u)\leq 0\quad \forall\,t\in T\text{ and } u\in \dom g_t^*\\
                    &\quad x \in X,
\end{split}\tag{SILP}\label{eq:SILP}
\end{equation}
where $g_t^*$ denotes the conjugate function of $g(\cdot,t)$. We contend, however, that this transformation is usually very ineffective, because if $X$ is $n$-dimensional, $T$ is $d$-dimensional, and (as it is very often the case) $d\ll n$, then the index set in the semi-infinite constraint set increases from $d$ to the considerably higher $d+n$. Also, the set $T$ and the function $g$ might have special properties that allow us to find violated inequalities $g(x,t)\leq 0$ relatively easily; a property that may not be inherited by the set $\{(t,u)\,|\,t\in T, u\in \dom g_t^*\}$ and the conjugate function $g^*$ in the inequality constraints of \eqref{eq:SILP}. This is also the case in our motivating application. For such problems, the use of non-linear convex cuts (sometimes called \emph{cutting surfaces}) generated directly from the original convex problem \eqref{eq:SICP} is preferred to the use of cutting planes generated from the equivalent linear formulation \eqref{eq:SILP}. 

Another family of semi-infinite convex problems where the use of cutting surfaces is more attractive than the use of cutting planes consists of problems where $X$
is a high-dimensional non-polyhedral set, whose polyhedral approximation to $X$ is expensive to construct. In this case, any
advantage gained from the linear reformulation of the semi-infinite constraints disappears, as \eqref{eq:SILP} still remains a
nonlinear convex program. Even if $X$ is polyhedral, and only the constraints $g$ are non-linear, cutting surfaces can be
attractive in the high-dimensional case, where a cutting plane method may require a large number of cuts to obtain a sufficiently
good polyhedral approximation of the non-linear constraints in the vicinity of the optimum. The trade-off between having to solve
a large number of linear master problems \textit{versus} having to solve a small number of non-linear convex master problems is
not clear, but rather problem-dependent. Example \ref{ex:4e} (in Section \ref{sec:numerical}) presents a case where the cutting
surface method scales well with the increasing dimensionality of the optimization problem, whereas the cutting plane method
breaks down.

Our algorithm is motivated by the ``central cutting plane'' algorithm of \citep{KortanekNo1993} for convex problems, which in turn is an extension of Gribik's algorithm \citep{Gribik1979}. Gribik's algorithm has been the prototype of several cutting plane algorithms in the field, and has been improved in various ways, such as in the ``accelerated central cutting plane'' method of \citep{Betro2004}. Our algorithm can also be viewed as a modification of a traditional convex constraint generation method, in which the restricted master problem attempts to drive its optimal solutions towards the center of the current outer approximation of the feasible set. The traditional constraint generation method is a special case of our algorithm with all centering parameters set to zero.

Our main contribution from the perspective of semi-infinite programming is that we extend the central cutting plane algorithm to a cutting surface algorithm allowing non-linear convex cuts. The possibility of dropping cuts is retained, although in our numerical examples we always found optimal solutions very quickly, before dropping cuts was necessary for efficiency. 

The outline of the paper is as follows. Distributionally robust optimization is reviewed in \mbox{Section \ref{sec:DRO}}, where
we also give a semi-infinite convex formulation of this problem, and state our result on the convergence of the optimum objective
value of the moment robust problems to that of stochastic programs.  We proceed by describing our cutting surface algorithm for
semi-infinite convex programming in \mbox{Section \ref{sec:algorithm}}, and proving its correctness and analyzing its rate of
convergence in \mbox{Section \ref{sec:convergence}}. The application of this method to distributionally robust optimization
requires a specialized column generation method, which is introduced in Section \ref{sec:CG}. Computational results, which
include both standard semi-infinite convex benchmark problems and distributionally robust utility maximization problems, follow
in \mbox{Section \ref{sec:numerical}}; with concluding remarks in \mbox{Section \ref{sec:conclusion}}.

\section{Distributionally robust and moment robust optimization}\label{sec:DRO}
Stochastic optimization and robust optimization are two families of optimization models introduced to tackle decision making
problems with uncertain data. Broadly speaking, robust optimization handles the uncertainty by optimizing for the worst case
within a prescribed set of scenarios, whereas stochastic optimization assumes that the uncertain data follows a specified
probability distribution. \emph{Distributionally robust optimization}, introduced in \citep{Scarf1957}, can be seen as a
combination of these approaches, where the optimal decisions are sought for the worst case within a prescribed set of probability
distributions that the data might follow. The term \emph{robust stochastic programming} is also often used to describe
optimization models of the same form.

Formally, let the uncertain data be described by a random variable supported on a set $\Xi\subseteq\real^d$, following an \emph{unknown} distribution $P$ from a set of probability distributions $\mathfrak{P}$. Then a general distributionally robust optimization problem is an optimization model of the form
\begin{equation}
\min_{x\in X} \max_{P\in\mathfrak{P}} \mathbb{E}_P[H(x)],\; \text{ or (equivalently) }\; \min_{x\in X} \max_{P\in\mathfrak{P}} \int_{\xi\in\Xi} h(x,\xi) P(d\xi),\tag{DRO}\label{eq:DRO}
\end{equation}
where $H$ is a random cost or disutility function we seek to minimize in expectation, $h$ is the corresponding weight function in the equivalent integral form; the argument $x$ of $H$ and $h$ is our decision vector. We assume that all expectations (integrals) exist and that the minima and maxima are well-defined. We shall also assume for the rest of the paper that the support set $\Xi$ is closed and bounded.

With the above notation, a general stochastic optimization problem is simply \eqref{eq:DRO} with a singleton $\mathfrak{P}$,
while a standard robust optimization problem is \eqref{eq:DRO} with a set $\mathfrak{P}$ that consists of all probability
distributions supported on a point in $\Xi$.

One can also view the general distributionally robust optimization problem not only as a common generalization of robust and stochastic optimization, but also as an optimization model with an adjustable level of risk-aversion. To see this, consider a nested sequence of sets of probability distributions $\mathfrak{P}_0 \supseteq \mathfrak{P}_1 \supseteq \cdots$, where $\mathfrak{P}_0$ is the set of all probability distributions supported on $\Xi$, and $\mathfrak{P}_\infty\defeq\cap_{i=0}^\infty \mathfrak{P}_i$ is a singleton set. In the corresponding sequence of problems \eqref{eq:DRO}, the first one is the classic robust optimization problem, which is the most conservative (risk-averse) of all, optimizing against the worst case, and the last one is the classic stochastic optimization problem, where the optimization is against a fixed distribution. At the intermediate levels the models correspond to decreasing levels of risk-aversion.

Such a sequence of problems can be constructed in many natural ways; we shall only focus on the case when the sequence of $\mathfrak{P}_i$'s is defined by constraining an increasing number of moments of the
underlying probability distribution. In this case the \eqref{eq:DRO} problem is called \emph{moment robust optimization} problem.
Theorem \ref{thm:DRO-to-SP} below establishes the ``convergence'' of this sequence of moment-robust optimization problems to a stochastic optimization problem for closed and bounded domains $\Xi$.

\begin{theorem}\label{thm:DRO-to-SP}
Let $h$ and $X$ as above, assume that $\Xi$ is closed and bounded, and that $h$ is continuous. Let $P$ be a probability distribution supported on $\Xi$, with moments $m_k(P) \defeq \int_\Xi \xi_1^{k_1}\cdots \xi_n^{k_n} P(d\xi)$. For each $i=0,1,\dots$, let $\mathfrak{P}_i$ denote the set of probability distributions $Q$ supported on $\Xi$ whose moments $m_k(Q)$ satisfy $m_k(Q) = m_k(P)$ for every multi-index $k$ with $0 \leq k_1+\dots+k_n \leq i$. Finally, for each $i=0,1,\dots$ define the moment-robust optimization problem \eqref{eq:DROi} as follows:
\begin{equation}\label{eq:DROi}
\min_{x\in X} \max_{Q\in\mathfrak{P}_i} \int_{\xi\in\Xi} h(x,\xi) Q(d\xi). \tag{DRO$_i$}
\end{equation}
Then the sequence of the optimal objective function values of \eqref{eq:DROi} converges to the optimal objective function value of the stochastic program
\begin{equation}\label{eq:SP}
\min_{x\in X} \int_{\xi\in\Xi}h(x,\xi)P(d\xi). \tag{SP}
\end{equation}
\end{theorem}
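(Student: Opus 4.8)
Write $v_i$ for the optimal value of \eqref{eq:DROi} and $v_\infty$ for that of \eqref{eq:SP}; the claim is $v_i \to v_\infty$. I would first record two elementary bounds. Since $\mathfrak{P}_0 \supseteq \mathfrak{P}_1 \supseteq \cdots$, the inner maxima in \eqref{eq:DROi} are non-increasing in $i$ for every fixed $x$, hence $v_0 \ge v_1 \ge \cdots$. And since $P \in \mathfrak{P}_i$ for every $i$, the inner maximum at any $x$ is at least $\int_\Xi h(x,\xi)\,P(d\xi)$, so $v_i \ge v_\infty$ for all $i$. Thus $(v_i)$ is non-increasing and bounded below by $v_\infty$, so it converges to some $v_* \ge v_\infty$, and it remains to prove $v_* \le v_\infty$.

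The key reduction is to a single test function. Let $x^*$ attain the minimum in \eqref{eq:SP} (which exists under the standing assumptions). Testing the outer minimum in \eqref{eq:DROi} at $x^*$ gives
\[
v_i \;\le\; M_i \;\defeq\; \max_{Q\in\mathfrak{P}_i}\int_\Xi f(\xi)\,Q(d\xi), \qquad f \defeq h(x^*,\cdot),
\]
where $f$ is continuous on the compact set $\Xi$. It therefore suffices to show $M_i \to \int_\Xi f\,dP = v_\infty$. As with $(v_i)$, the sequence $(M_i)$ is non-increasing and bounded below by $\int_\Xi f\,dP$, so it converges to some $M_* \ge \int_\Xi f\,dP$, and I must show $M_* \le \int_\Xi f\,dP$.

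For this, observe first that each $\mathfrak{P}_i$ is weak-$*$ closed in the (weak-$*$ compact) set of Borel probability measures on the compact set $\Xi$, because every moment map $Q \mapsto \int_\Xi \xi_1^{k_1}\cdots\xi_n^{k_n}\,Q(d\xi)$ is weak-$*$ continuous there; since $Q\mapsto\int_\Xi f\,dQ$ is also weak-$*$ continuous, the maximum defining $M_i$ is attained, say at $Q_i\in\mathfrak{P}_i$. As $\Xi\subseteq\real^d$ is compact metric, $C(\Xi)$ is separable, so by weak-$*$ sequential compactness of the probability measures on $\Xi$ (Prokhorov, or Banach--Alaoglu) the sequence $(Q_i)$ has a weak-$*$ convergent subsequence $Q_{i_j}\rightharpoonup \bar Q$. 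For any multi-index $k$, once $i_j \ge k_1+\cdots+k_n$ we have $\int_\Xi \xi_1^{k_1}\cdots\xi_n^{k_n}\,Q_{i_j}(d\xi) = m_k(P)$; letting $j\to\infty$ yields $\int_\Xi \xi_1^{k_1}\cdots\xi_n^{k_n}\,\bar Q(d\xi) = m_k(P) = \int_\Xi \xi_1^{k_1}\cdots\xi_n^{k_n}\,P(d\xi)$. Hence $\bar Q$ and $P$ assign the same integral to every polynomial, and since polynomials are dense in $C(\Xi)$ by the Stone--Weierstrass theorem, $\bar Q = P$. Consequently $M_* = \lim_j M_{i_j} = \lim_j \int_\Xi f\,dQ_{i_j} = \int_\Xi f\,d\bar Q = \int_\Xi f\,dP$, which is exactly what was needed; therefore $v_i \to v_\infty$.

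The only point requiring genuine care is the decision to reduce to a \emph{fixed} function $f = h(x^*,\cdot)$: the superficially natural route of proving that the inner maximum over $\mathfrak{P}_i$ converges to $\int_\Xi h(x,\cdot)\,dP$ uniformly in $x$ is harder and, as the argument above shows, unnecessary once one exploits the outer minimum. The remaining ingredients are standard — weak-$*$ compactness of the moment-constrained sets of probability measures, and uniqueness of the representing measure of a moment sequence on a compact set — and both rely essentially on the compactness of $\Xi$, which is precisely the hypothesis of the theorem.
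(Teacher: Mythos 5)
Your proof is correct, and its first two steps (monotonicity plus the lower bound $v_i\ge v_\infty$, then the reduction to the single test function $f=h(x^*,\cdot)$ by evaluating the outer minimum at an optimizer of \eqref{eq:SP}) coincide with the paper's argument. Where you diverge is the finishing step: the paper argues directly and quantitatively, picking a maximizer $\bar Q_i\in\mathfrak{P}_i$, noting that $\bar Q_i$ and $P$ integrate every polynomial of degree at most $i$ identically, and then using a best uniform polynomial approximation $p_{j(\epsilon)}$ of $h(\bar x,\cdot)$ to get $|\bar z_i - z_{SP}|<2\epsilon$ for all $i\ge j(\epsilon)$ via two triangle inequalities; you instead invoke weak-$*$ compactness of the probability measures on $\Xi$, extract a convergent subsequence of maximizers, identify its limit as $P$ through the moment conditions and Stone--Weierstrass, and pass to the limit in $\int f\,dQ_{i_j}$. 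Both routes rest on the same two pillars (compactness of $\Xi$ and density of polynomials in $C(\Xi)$), but the paper's version is more elementary and yields an explicit error estimate tied to the degree of polynomial approximation, and it needs no subsequence extraction or separability/Prokhorov machinery; your version needs the attainment of the inner maxima (which you do justify via weak-$*$ closedness and continuity, and which the paper anyway assumes) and in exchange delivers, as part of the proof, the weak convergence $\bar Q_i \rightharpoonup P$, a fact the paper only records as a remark after the theorem. Either argument is acceptable; yours is a slightly heavier but conceptually clean functional-analytic variant of the same idea.
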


The proof is given in the Appendix.

It is interesting to note that in the above theorem the function $h(x,\cdot)$ could be replaced by any continuous function
$f\colon\Xi\mapsto\real$ that does not depend on $x$, proving that
\[ \lim_{i\to\infty} \int_{\xi\in\Xi}f(\xi)\bar{Q}_i(d\xi) = \int_{\xi\in\Xi}f(\xi)P(d\xi) \]
for \emph{every continuous function} $f\colon\Xi\mapsto\real$; in other words, the sequence of measures $\bar{Q}_0, \bar{Q}_1,
\dots$ converges weakly to $P$, and so does every other sequence of measures in which the moments of the $i$th measure agree with
the moments of $P$ up to order $i$. Therefore, Theorem \ref{thm:DRO-to-SP} can be seen as a generalization of the well-known
theorem that the moments of a probability distribution with compact support uniquely determine the distribution. For
distributions with unbounded support, a statement similar to Theorem \ref{thm:DRO-to-SP} can only be made if the moments in
question uniquely determine the probability distribution $P$. A collection of sufficient conditions under which infinite moment
sequences determine a distribution can be found in the recent review article \citep{KleiberStoyanov2013}.

In a more realistic, data-driven setting, bounds on the moments of uncertain data can be obtained by computing confidence intervals around the sample moments of the empirical distribution, and by application-specific considerations, such as a measurement or other error having mean zero.

\subsection{Past work}
In most applications since Scarf's pioneering work \citep{Scarf1957}, the set of distributions $\mathfrak{P}$ is defined by setting bounds on the moments of $P$; recent examples include \citep{DelageYe2010}, \citep{Bertsimas2010models}, and \citep{MehrotraZhang2013}. Simple lower and upper bounds (confidence intervals and ellipsoids) on moments of arbitrary order are easily obtained using standard statistical methods; \citep{DelageYe2010} describes an alternative method to derive bounds on the first and second moments. However, to the best of our knowledge, no algorithm has been proposed until now to solve \eqref{eq:DRO} with sets $\mathfrak{P}$ defined by constraints on moments of order higher than two.

Recent research has focused on conditions under which \eqref{eq:DRO} with moment constraints can be solved in polynomial time. \citet{DelageYe2010} consider an uncertainty set defined via a novel type of confidence set around the mean vector and covariance matrix, and show that \eqref{eq:DRO} with uncertainty sets of this type can be solved in polynomial time (using the ellipsoid method) for a class of probility mass functions $h$ that are convex in $x$ but concave in $\xi$. \citet{MehrotraZhang2013} extend this result by providing polynomial time methods (using semidefinite programming) for least squares problems, which are convex in both $x$ and $\xi$. The uncertainty sets in their formulation are defined through bounds on the measure, bounds on the distance from a reference measure, and moment constraints of the same form as considered in \citep{DelageYe2010}. \citet{Bertsimas2010models} consider two-stage robust stochastic models in which risk aversion is modeled in a moment robust framework using first and second order moments.

Our method is not polynomial time, but it can be applied to problems where bounds of moments of arbitrary order (and possibly bounds on non-polynomial moments) are available. This allows the decision maker to shape the distributions in $\mathfrak{P}$ better. Moments up to order $4$ are easily interpretable and have been used to strengthen the formulation of stochastic programming models. \citep{HKW-03} provides a heuristic to improve stochastic programming models using first and second order moments as well as marginal moments up to order $4$.

Our approach is based on a semi-infinite convex reformulation of \eqref{eq:DRO}, which is discussed next.

\subsection{Distributionally robust optimization as a semi-infinite convex program}

Consider the second (integral) form of \eqref{eq:DRO} with a function $h$ that is convex in $x$ for every $\xi$. If $\Xi$ and $X$ are bounded sets, the optimal objective function value can be bracketed in an interval $[z_\text{min},z_\text{max}]$, and the problem can be written as a semi-infinite convex optimization problem
\begin{equation}
\begin{split}
\textrm{minimize}   &\quad z\\
\textrm{subject to} &\quad -z + \int_\Xi h(x,\xi) P(d\xi) \leq 0 \quad \forall\,P\in \mathfrak{P}\\
                    &\quad (z,x) \in [z_\text{min},z_\text{max}] \times X,
\end{split}\label{eq:DRO-SICP}
\end{equation}
which is a problem of the form \eqref{eq:SICP}; the set $\mathfrak{P}$ plays the role of $T$; $z$ plays the role of $x_0$. Note
that in the above problem the index set of the constraints is not a low-dimensional set, as it is common in semi-infinite convex
programming, but an infinite dimensional set. Therefore, we cannot assume without further justification that violated
inequalities in \eqref{eq:SICP} can be easily found.

It can be verified, however, that this problem satisfies Assumption \ref{ass:1} as long as $h$ has bounded subdifferentials on the boundary of $\Xi$. \mbox{Assumption \ref{ass:2}} for \eqref{eq:DRO-SICP} means that for the current best estimate $z^{(k)}$ of the optimal $z$ given by the algorithm, we can find a $P$ such that $\int_\Xi h(x,\xi) P(d\xi) > z^{(k)}$ provided that there is a $P$ for which $\int_\Xi h(x,\xi) P(d\xi) > z^{(k)} + \epsilon$. As $z^{(k)}$ approaches the optimal value of the integral, \mbox{Assumption \ref{ass:2}} gradually translates to being able to find
\begin{equation}\label{eq:maxP}
\sup_{P\in\mathfrak{P}} \int_\Xi h(x,\xi) P(d\xi)
\end{equation}
(in which $x$ is a parameter) within a prescribed $\epsilon>0$ error. We shall concentrate on this problem in the context of
moment robust optimization in Section~\ref{sec:CG}.

In the moment-robust formulation of \eqref{eq:DRO} the set $\mathfrak{P}$ is defined via bounds on some (not necessarily polynomial) moments: given continuous $\Xi\mapsto\real$ \emph{basis functions} $f_1,\dots,f_N$, and a lower and upper bound vector $\ell$ and $u$ on the corresponding moments, we set
\begin{equation}\label{eq:defP}
\mathfrak{P} = \left\{P\;\middle|\;\int_\Xi f_i(\xi) P(d\xi) \in [\ell_i, u_i],\, i=1,\dots,N \right\}.
\end{equation}
In typical applications the $f_i$ form a basis of low-degree polynomials. For example, if we wish to optimize for the worst-case
distribution among distributions having prescribed mean vector and covariance matrix, then  $f_i$ can be the $n$-variate
monomials up to degree two (including the constant $1$ function), and $\ell=u$ is the vector of prescribed moments (including the
``zeroth moment'', $1$).

\section{A central cutting surface algorithm for semi-infinite convex programming}\label{sec:algorithm}

The pseudo-code of our cutting surface algorithm is given in Algorithm \ref{alg:CONCUSSION}. A few remarks are in order before we proceed to proving its correctness.

First, we assume that the instance of \eqref{eq:SICP} that we wish to solve satisfies \mbox{Assumptions \ref{ass:1} and \ref{ass:2}}. The algorithm also applies to the semi-infinite formulation \eqref{eq:DRO-SICP} of distributionally robust optimization. In that context, \mbox{Assumption \ref{ass:1}} is satisfied as long as $h$ has bounded subdifferentials on the boundary of $\Xi$. As discussed in the previous section, \mbox{Assumption \ref{ass:2}} translates to being able to find $\epsilon$-optimal solutions to problems of the form \eqref{eq:maxP}.

Second, by correctness of \mbox{Algorithm \ref{alg:CONCUSSION}} it is meant that the algorithm computes an $\epsilon$-optimal solution to \eqref{eq:SICP} as long as Assumption \ref{ass:2} is satisfied with the same $\epsilon$.

Throughout the algorithm, $y^{(k-1)}$ is the best $\epsilon$-feasible solution found so far (or the initial vector $y^{(0)}$), and its first coordinate, $y^{(k-1)}_0$ is an upper bound on the objective function value of the best $\epsilon$-feasible point.  The initial value of $y^{(0)}_0$ is an arbitrary upper bound $U$ on this optimum; the other components of $y^{(0)}$ may be initialized arbitrarily.



In Step 2 of the algorithm we attempt to improve on the current upper bound by as much as possible and identify a ``central'' point $x^{(k)}$ that satisfies all the added inequalities with a large slack. The algorithm stops in Step 3 when no such improvement is possible.

In each iteration $k$, either a new cut is added in Step 5 that cuts off the last, infeasible, $x^{(k)}$ (a \emph{feasibility cut}), or it is found that $x^{(k)}$ is an $\epsilon$-feasible solution, and the best found $\epsilon$-feasible solution $y^{(k)}$ is updated in Step 6 (an \emph{optimality cut}). In either case, some inactive cuts are dropped in the optional Step 7. The parameter $\beta$ adjusts how aggressively cuts are dropped; setting $\beta = \infty$ is equivalent to skipping this step altogether.

In Step 5 of every iteration $k$ a centering parameter $s^{(k)}$ needs to be chosen. To ensure convergence of the method, it is sufficient that this parameter is bounded away from zero, and that it is bounded from above: $s_{\min} \leq s^{(k)} \leq B$ for every $k$, with some $s_{\min} > 0$. (It is without loss of generality that we use the same upper bound as we used for the subgradient norms.) Another strategy that ensures convergence is to find a subgradient $d\in\partial_x g(x^{(k)},t^{(k)})$ and set $s^{(k)} = \alpha \|d\|$ with an arbitrary $\alpha\in (0,1]$, which will give positive values for the centering parameter, but is not necessarily bounded away from zero. Below we prove that Algorithm \ref{alg:CONCUSSION} converges in all of these cases.

\begin{figure}[!ht]
\begin{mdframed}[innerleftmargin=2em]
\begin{algorithm}[Central cutting surface algorithm]\label{alg:CONCUSSION}
\hfill\par
\begin{enumerate}
\item[] Parameters: a strict upper bound $U$ on the optimal objective function value of \eqref{eq:SICP}; a $B>0$ for which Assumption \ref{ass:1} holds; a tolerance $\epsilon\geq 0$ for which Assumption \ref{ass:2} holds; and an arbitrary $\beta > 1$ specifying how aggressively cuts are dropped.
\item[Step 1.] \textbf{(Initialization.)} Set $k=1$, $y^{(0)}=(U,0,\dots,0)\in\real^n$, and $J^{(0)} = \emptyset$.
\item[Step 2.] \textbf{(Solve master problem.)} Determine the optimal solution $(x^{(k)},\sigma^{(k)})$ of the optimization problem
\begin{equation}\label{eq:master}
\begin{split}
\textrm{maximize}   &\quad \sigma\\
\textrm{subject to} &\quad x_0 + \sigma \leq y_0^{(k-1)}\\
                    &\quad g(x,t^{(j)}) + \sigma s^{(j)} \leq 0\quad \forall\,j\in J^{(k-1)}\\
                    &\quad x \in X.
\end{split}
\end{equation}
\item[Step 3.] \textbf{(Optimal solution?)} If $\sigma^{(k)} = 0$, stop and return $y^{(k-1)}$.
\item[Step 4.] \textbf{(Feasible solution?)} Find a $t^{(k)}\in T$ satisfying $g(x^{(k)},t^{(k)})>0$ if possible.\\
If no such $t^{(k)}$ is found, go to Step 6.
\item[Step 5.] \textbf{(Feasibility cut.)} Set $J^{(k)} = J^{(k-1)}\cup \{k\}$ and $y^{(k)} = y^{(k-1)}$; choose a centering parameter $s_{\min}\leq s^{(k)}\leq B$. (See the text for different strategies.)\\
Go to Step 7.
\item[Step 6.] \textbf{(Optimality cut; update best known $\epsilon$-feasible solution.)} Set  $J^{(k)} = J^{(k-1)}$ and $y^{(k)} = x^{(k)}$.
\item[Step 7.] \textbf{(Drop cuts.)} Let $D = \{j\,|\,\sigma^{(j)} \geq \beta\sigma^{(k)} \text{ and } g(x^{(k)}) + \sigma^{(k)}s^{(j)} < 0\}$, and set $J^{(k)} = J^{(k)}\setminus D$.
\item[Step 8.] Increase $k$ by one, and go to Step 2.
\end{enumerate}
\end{algorithm}
\end{mdframed}
\end{figure}

\section{Correctness and rate of convergence}\label{sec:convergence}

We show the correctness of the algorithm by proving the following theorems. We tacitly assume that the centering parameters $s^{(k)}$ are chosen in Step 5 according to one of the two strategies mentioned above.

\begin{theorem}\label{thm:1}
Suppose that \mbox{Algorithm \ref{alg:CONCUSSION}} terminates in the $k$th iteration. Then $y^{(k-1)}$ is an $\epsilon$-optimal solution to \eqref{eq:SICP}.
\end{theorem}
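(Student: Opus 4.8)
The plan is to show that the termination condition $\sigma^{(k)}=0$ forces $y^{(k-1)}$ to be both $\epsilon$-feasible and to satisfy $y^{(k-1)}_0\le x_0^*$, the (attained) optimal value of \eqref{eq:SICP}. For the feasibility part I would first record an invariant maintained throughout the algorithm: at the beginning of each iteration $y^{(k-1)}$ is either the initial vector $y^{(0)}=(U,0,\dots,0)$ or an earlier iterate $x^{(j)}$ that was assigned to $y$ in Step 6. In the latter case, Step 6 is reached precisely when Step 4 fails to find a $t\in T$ with $g(x^{(j)},t)>0$, and by the contrapositive of Assumption \ref{ass:2} this failure certifies that $x^{(j)}$ is $\epsilon$-feasible. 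Thus every value $y$ can take other than $y^{(0)}$ is $\epsilon$-feasible, and it will remain only to exclude $y^{(k-1)}=y^{(0)}$ at termination.

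The crux is the claim that $\sigma^{(k)}=0$ implies $y^{(k-1)}_0\le x_0^*$, which I would prove by contraposition. Suppose $y^{(k-1)}_0>x_0^*$. Take the Slater point $\bar x$ of Assumption \ref{ass:1} (with $g(\bar x,t)\le -1$ for all $t$, after normalizing $\eta=1$) and an optimal solution $x^*$ of \eqref{eq:SICP}. For $\lambda\in(0,1]$ put $x_\lambda=\lambda\bar x+(1-\lambda)x^*\in X$; convexity of each $g(\cdot,t^{(j)})$ gives $g(x_\lambda,t^{(j)})\le\lambda g(\bar x,t^{(j)})+(1-\lambda)g(x^*,t^{(j)})\le-\lambda$, while $(x_\lambda)_0=\lambda\bar x_0+(1-\lambda)x_0^*\to x_0^*<y^{(k-1)}_0$ as $\lambda\downarrow 0$. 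Fix $\lambda>0$ small enough that $y^{(k-1)}_0-(x_\lambda)_0>0$ and set $\sigma_\lambda=\min\{\lambda/B,\ y^{(k-1)}_0-(x_\lambda)_0\}>0$. Since $s^{(j)}\le B$ under either centering-parameter strategy (in the second one, $s^{(j)}=\alpha\|d\|$ with $\|d\|\le B$ by Assumption \ref{ass:1}), one checks directly that $(x_\lambda,\sigma_\lambda)$ is feasible for the master problem \eqref{eq:master}: the constraint $x_0+\sigma\le y^{(k-1)}_0$ holds by the choice of $\sigma_\lambda$, the constraint $g(x_\lambda,t^{(j)})+\sigma_\lambda s^{(j)}\le-\lambda+(\lambda/B)B=0$ holds for all $j\in J^{(k-1)}$, and $x_\lambda\in X$. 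Hence $\sigma^{(k)}\ge\sigma_\lambda>0$, a contradiction.

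Combining the pieces finishes the proof: at termination $\sigma^{(k)}=0$, so $y^{(k-1)}_0\le x_0^*<U=y^{(0)}_0$ because $U$ is a \emph{strict} upper bound; this rules out $y^{(k-1)}=y^{(0)}$, so $y^{(k-1)}$ was set in some Step 6 and is therefore $\epsilon$-feasible, and together with $y^{(k-1)}_0\le x_0^*$ it is an $\epsilon$-optimal solution. I expect the main obstacle to be the homotopy construction in the crux claim --- specifically, noticing that $\bar x$ by itself will not do (its objective $\bar x_0$ may exceed $y^{(k-1)}_0$, or even $U$) and that mixing in the optimizer $x^*$ is what simultaneously keeps the objective below $y^{(k-1)}_0$ while leaving enough slack in the constraints $g(\cdot,t^{(j)})\le 0$ to absorb a strictly positive $\sigma$. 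Everything else is bookkeeping with the algorithm's invariants and Assumption \ref{ass:2}.
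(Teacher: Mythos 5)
Your proposal is correct and follows essentially the same route as the paper: your ``crux claim'' is exactly the paper's Lemma \ref{lem:core} (the convex combination $\lambda\bar x+(1-\lambda)x^*$ of the Slater point and an optimizer yielding a strictly positive feasible $\sigma$ in the master problem), and your bookkeeping that any $y$-value other than $y^{(0)}$ comes from Step 6 and is hence $\epsilon$-feasible, combined with the strictness of $U$, is the same content as the paper's Lemma \ref{lem:feasibility-kept} argument, merely applied in the reverse order (objective bound first, feasibility second, rather than feasibility first, optimality second).
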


\begin{theorem}\label{thm:2}
Suppose that \mbox{Algorithm \ref{alg:CONCUSSION}} does \emph{not} terminate. Then there exists an index $\hat{k}$ such that the sequence $(y^{(\hat{k}+i)})_{i=1,2,\dots}$ consists entirely of $\epsilon$-feasible solutions.
\end{theorem}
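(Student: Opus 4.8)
The plan is to reduce the statement to a single fact: if the algorithm runs forever, Step~6 is executed at least once. Indeed, $y^{(k)}$ is modified only in Step~6, where it is set to $x^{(k)}$, and reaching Step~6 means Step~4 found no violated $t^{(k)}$, which by Assumption~\ref{ass:2} forces $x^{(k)}$ to be $\epsilon$-feasible; Step~5 leaves $y^{(k)}=y^{(k-1)}$ untouched. Hence, if $k_0$ is the first iteration in which Step~6 fires, a one-line induction shows $y^{(k)}$ is $\epsilon$-feasible for all $k\ge k_0$, so $\hat k=k_0-1$ works. It therefore suffices to rule out the scenario in which the algorithm never terminates and every iteration performs a feasibility cut (Step~5). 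In that scenario $y^{(k)}\equiv y^{(0)}=(U,0,\dots,0)$, so the objective-row constraint of every master problem \eqref{eq:master} reads $x_0+\sigma\le U$; I will derive a contradiction.

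First I would show the optimal master values $\sigma^{(k)}$ are bounded away from $0$. Using the Slater point $\bar x$ of Assumption~\ref{ass:1}(2) — which I may assume satisfies $\bar x_0<U$, after replacing it by a convex combination $(1-\lambda)x^*+\lambda\bar x$ with an optimal solution $x^*$ of \eqref{eq:SICP} (legitimate since $U$ strictly exceeds the optimum, at the cost of shrinking $\eta$) — the pair $(\bar x,\bar\sigma)$ with $\bar\sigma\defeq\min\{U-\bar x_0,\ \eta/B\}>0$ satisfies every constraint of \eqref{eq:master} in every iteration, because $g(\bar x,t^{(j)})+\bar\sigma s^{(j)}\le -\eta+(\eta/B)B=0$ for each retained cut $j$ (here $s^{(j)}\le B$, which holds under either centering rule). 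Thus $\sigma^{(k)}\ge\bar\sigma>0$ for all $k$, while boundedness of $X$ gives $\sigma^{(k)}\le U-\min_{x\in X}x_0<\infty$. Set $\sigma^*\defeq\liminf_k\sigma^{(k)}\in[\bar\sigma,\infty)$.

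The part I expect to be the main obstacle is handling the optional cut-dropping of Step~7. Pick $\epsilon'>0$ with $(\sigma^*+\epsilon')/\beta<\sigma^*-\epsilon'$ (possible since $\beta>1$ and $\sigma^*>0$) and an index $K$ with $\sigma^{(k)}>\sigma^*-\epsilon'$ for all $k\ge K$. The key observation is that any cut $k$ with $k\ge K$ and $\sigma^{(k)}<\sigma^*+\epsilon'$ is \emph{never} dropped: a drop at some iteration $m'>k$ would require, by the rule in Step~7, $\sigma^{(k)}\ge\beta\sigma^{(m')}$, hence $\sigma^{(m')}\le\sigma^{(k)}/\beta<(\sigma^*+\epsilon')/\beta<\sigma^*-\epsilon'$, contradicting $\sigma^{(m')}>\sigma^*-\epsilon'$. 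There are infinitely many such cuts (infinitely many iterations satisfy $\sigma^{(k)}<\sigma^*+\epsilon'$ by definition of $\liminf$), so by compactness of $X$ I can extract from them a subsequence $(k_i)$ with $x^{(k_i)}\to\hat x$. For $i<j$ the cut $k_i$ is still present at iteration $k_j$, so $g(x^{(k_j)},t^{(k_i)})\le-\sigma^{(k_j)}s^{(k_i)}\le-\bar\sigma s^{(k_i)}$; combined with $g(x^{(k_i)},t^{(k_i)})>0$ and either the $B$-Lipschitz continuity of $g(\cdot,t^{(k_i)})$ (first rule, where $s^{(k_i)}\ge s_{\min}$) or the subgradient inequality at $x^{(k_i)}$ with $s^{(k_i)}=\alpha\|d^{(k_i)}\|$, $d^{(k_i)}\in\partial_x g(x^{(k_i)},t^{(k_i)})$, and $\|d^{(k_i)}\|>0$ (the latter because a feasibility cut together with the Slater point precludes $x^{(k_i)}$ from globally minimizing $g(\cdot,t^{(k_i)})$), this forces $\|x^{(k_i)}-x^{(k_j)}\|>\delta$ for a fixed $\delta>0$ independent of $i,j$ — contradicting that $(x^{(k_i)})$ is Cauchy. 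Apart from the cut-dropping bookkeeping, the separated-iterates argument is the familiar one from central cutting plane methods; the novelty here is only that the cuts, and the associated slack computations, are nonlinear.
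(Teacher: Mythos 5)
Your proof is correct. It reaches the same destination as the paper but by a different decomposition: the paper proves Theorem \ref{thm:2} in three lines as a corollary of Lemma \ref{lem:feasibility-kept} ($\epsilon$-feasibility, once attained, is inherited), Lemma \ref{lem:core} (a Slater-based lower bound $\sigma^{(k)}\geq\sigma_0(\delta)>0$ whenever the incumbent bound exceeds the optimum by $\delta$), and Lemma \ref{lem:sigma-conv-0} ($\sigma^{(k)}\searrow 0$ when the algorithm runs forever). Your self-contained argument re-derives exactly the special cases needed: your feasible pair $(\bar x,\min\{U-\bar x_0,\eta/B\})$ is Lemma \ref{lem:core} specialized to $\delta=U-x_0^*$ (including the same convex-combination adjustment of the Slater point), and your separated-subsequence contradiction is the feasibility-cut branch of Lemma \ref{lem:sigma-conv-0}'s proof. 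Two genuine differences are worth noting. First, you work with $\liminf_k\sigma^{(k)}$ and the $\epsilon'$-versus-$\beta$ argument to show that certain cuts are never dropped, which sidesteps the paper's claim that $(\sigma^{(k)})$ is monotone non-increasing (a claim that rests on the observation that only cuts inactive at the current master optimum are dropped); your bookkeeping is therefore somewhat more robust, at the cost of being usable only here, whereas the paper's monotone-limit version of the statement is reused in Theorems \ref{thm:1} and \ref{thm:3}. Second, because your contradiction scenario has feasibility cuts in every iteration, you never need the optimality-cut case of the separation argument ($\|x^{(k)}-x^{(j)}\|\geq\sigma^{(k)}$ via the objective-row constraint), which the paper's more general lemma must handle. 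Minor remarks: your handling of $d^{(k_i)}\neq 0$ via the Slater point is fine (the paper gets it even more directly from the strict inequality itself), and your observation that $s^{(j)}\leq B$ under either centering rule, needed for the Slater pair to satisfy $g(\bar x,t^{(j)})+\bar\sigma s^{(j)}\leq 0$, is exactly the role $B$ plays in Lemma \ref{lem:core}.
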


\begin{theorem}\label{thm:3}
Suppose that \mbox{Algorithm \ref{alg:CONCUSSION}} does \emph{not} terminate. Then the sequence $(y^{(k)})_{k=1,2,\dots}$ has an accumulation point, and each accumulation point is an $\epsilon$-optimal solution to \eqref{eq:SICP}.
\end{theorem}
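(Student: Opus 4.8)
The plan is to combine three ingredients: compactness of $X$ (to produce an accumulation point), uniform Lipschitz continuity of each $g(\cdot,t)$ (to pass $\epsilon$-feasibility to a limit), and the Slater point together with the centering and cut-dropping mechanism (to pin down the limiting objective value). First I would record that $\bigl(y_0^{(k)}\bigr)_k$ is non-increasing: it is unchanged at feasibility-cut iterations (Step 5), and at an optimality-cut iteration it drops from $y_0^{(k-1)}$ to $x_0^{(k)}=y_0^{(k-1)}-\sigma^{(k)}$ with $\sigma^{(k)}>0$ (Step 3 never fires, since the algorithm does not terminate); and it is bounded below because $X$ is bounded. Hence $y_0^{(k)}\downarrow y_0^\infty$. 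By Theorem \ref{thm:2} all $y^{(k)}$ from some index on are $\epsilon$-feasible, hence lie in the compact set $X$, so $(y^{(k)})$ has an accumulation point, and the first coordinate of any accumulation point equals $y_0^\infty$. The uniform subgradient bound in Assumption \ref{ass:1} makes each $g(\cdot,t)$ $B$-Lipschitz on $X$ (by subgradient monotonicity), so along a subsequence $y^{(k_j)}\to y^*$ of eventually-$\epsilon$-feasible iterates one gets $g(y^*,t)=\lim_j g(y^{(k_j)},t)\le\epsilon$ for every $t$; thus every accumulation point is $\epsilon$-feasible. It remains only to show $y_0^\infty\le x_0^*$.

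For this I would argue by contradiction, assuming $y_0^\infty=x_0^*+3\gamma$ with $\gamma>0$. Take an optimal solution $x^*$ of \eqref{eq:SICP} and the Slater point $\bar x$ (so $g(\bar x,t)\le-1$ for all $t$); the point $x_\delta:=(1-\delta)x^*+\delta\bar x\in X$ satisfies $g(x_\delta,t)\le-\delta$ for all $t$ and has $(x_\delta)_0=x_0^*+\delta(\bar x_0-x_0^*)$. Choosing $\delta>0$ small enough that $\delta|\bar x_0-x_0^*|\le\gamma$ and $\delta\le1$, the pair $\bigl(x_\delta,\rho\bigr)$ with $\rho:=\min\{\delta/B,\,2\gamma\}>0$ is feasible for every master problem \eqref{eq:master} (the objective-bound constraint uses $y_0^{(k-1)}\ge y_0^\infty$, the cut constraints use $s^{(j)}\le B$), whence $\sigma^{(k)}\ge\rho$ for all $k$. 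If infinitely many iterations were optimality cuts, $y_0^{(k)}$ would decrease by at least $\rho$ infinitely often, contradicting its lower bound; so beyond some iteration $k^*$ every iteration is a feasibility cut, $y_0^{(k)}$ is constant, and (non-termination) there are infinitely many such cuts. Note also $\sigma^{(k)}$ is bounded above, by $U-\inf_{x\in X}x_0$ say.

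Next I would bound the number of active cuts. At a feasibility-cut iteration $j$ one has $g(x^{(j)},t^{(j)})>0$, which together with $g(\bar x,t^{(j)})\le-1$ and convexity forces $\|d\|\ge 1/\operatorname{diam}(X)$ for every $d\in\partial g_x(x^{(j)},t^{(j)})$; hence $s^{(j)}\ge\underline s$ for a fixed $\underline s>0$ under either centering rule. If cut $j$ is still active at a later feasibility-cut iteration $k$, then $g(x^{(k)},t^{(j)})\le-\sigma^{(k)}s^{(j)}\le-\rho\underline s<0<g(x^{(j)},t^{(j)})$, so $B$-Lipschitzness gives $\|x^{(k)}-x^{(j)}\|>\rho':=\rho\underline s/B$. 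Since cuts are only ever removed (never re-added), any two indices in $J^{(k-1)}$ were simultaneously present at the later of their creation iterations, so $\{x^{(j)}:j\in J^{(k-1)}\}$ is a $\rho'$-separated subset of the compact set $X$; it therefore has at most $N$ elements, where $N$ is any number of radius-$\rho'/3$ balls covering $X$. Hence $|J^{(k-1)}|\le N$ for all $k$.

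The final step, which I expect to be the crux, is to turn the cut-dropping rule against the hypothesis; this is also the only place where $\beta>1$ and Step 7 are genuinely used. A cut $j$ is dropped at iteration $k$ only if $\sigma^{(j)}\ge\beta\sigma^{(k)}$; since $\sigma^{(k)}\ge\rho$ throughout and $\beta>1$, any cut $j$ with $\sigma^{(j)}<\beta\rho$ can \emph{never} be dropped. Because at most $N$ cuts are ever active at once, only finitely many feasibility cuts have $\sigma^{(j)}<\beta\rho$, so $\sigma^{(k)}\ge\beta\rho$ for all large $k$. Bootstrapping — repeating the argument with $\beta\rho$, then $\beta^2\rho$, etc., the ``never dropped'' observation now applying to cuts created past the current threshold — yields $\sigma^{(k)}\ge\beta^{\ell}\rho$ for all large $k$ and every $\ell$, contradicting the upper bound on $\sigma^{(k)}$. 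This contradiction forces $y_0^\infty\le x_0^*$; combined with the $\epsilon$-feasibility established above (which applies verbatim to any accumulation point), every accumulation point is an $\epsilon$-optimal solution to \eqref{eq:SICP}. The delicate point, and the one I would write most carefully, is that the optional cut-dropping cannot simply be ignored: the separation/counting bound must be fed into the geometric ratcheting of $\sigma^{(k)}$ to rule out an infinite run of feasibility cuts.
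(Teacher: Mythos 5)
Your proof is correct, but it reaches the optimality half of the statement by a genuinely different route than the paper. The paper's proof is a short consequence of two earlier lemmas: Lemma \ref{lem:sigma-conv-0}, which shows (via monotonicity of $\sigma^{(k)}$ and a separation-of-iterates argument) that $\sigma^{(k)}\searrow 0$ whenever the algorithm does not terminate, and Lemma \ref{lem:core}, which shows that a persistent gap $y_0^{(k-1)}-x_0^*\geq\delta>0$ keeps $\sigma^{(k)}$ bounded away from zero; an accumulation point that is not $\epsilon$-optimal makes these two facts collide. You never prove $\sigma^{(k)}\to 0$. Instead you inline the Slater-point construction of Lemma \ref{lem:core} to get $\sigma^{(k)}\geq\rho>0$ under the contradiction hypothesis, rule out infinitely many optimality cuts by the decrease of $y_0^{(k)}$, and dispose of the remaining case (infinitely many feasibility cuts) by a packing bound --- a still-active cut forces its creating iterate to be $\rho\underline{s}/B$-separated from later iterates, so at most $N$ cuts coexist --- combined with a geometric ratcheting of the drop threshold: cuts with $\sigma^{(j)}<\beta\rho$ can never be dropped, hence at most $N$ exist, hence eventually $\sigma^{(k)}\geq\beta\rho$, and bootstrapping gives $\sigma^{(k)}\geq\beta^{\ell}\rho$ for every $\ell$, contradicting the trivial bound $\sigma^{(k)}\leq U-\min_{x\in X}x_0$. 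The ingredients (Slater feasibility in the master, the subgradient bound turning an active cut into separation of iterates, compactness of $X$) are the same ones the paper uses inside Lemma \ref{lem:sigma-conv-0}, but your arrangement never needs $\sigma^{(k)}$ to be monotone or convergent, uses $\beta$ and Step 7 explicitly, and yields a bound on the number of simultaneously active cuts; the paper's arrangement is shorter here and reuses Lemma \ref{lem:sigma-conv-0} in Theorem \ref{thm:2} and in the rate-of-convergence section. Two small slips to repair: the first master constraint need not be active, so write $x_0^{(k)}\leq y_0^{(k-1)}-\sigma^{(k)}$ rather than equality; and non-termination only gives $\sigma^{(k)}\neq 0$, not $\sigma^{(k)}>0$, so justify the non-increase of $y_0^{(k)}$ as in Lemma \ref{lem:sigma-conv-0} (feasibility of $(x^*,0)$ in the master); inside your contradiction branch you already have $\sigma^{(k)}\geq\rho>0$, so nothing else is affected.
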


\noindent Therefore, the algorithm either finds an $\epsilon$-optimal solution after finitely many iterations, or approaches one in the limit. Even in the second case, the $\epsilon$-optimal solution is approached through a sequence of (eventually) $\epsilon$-feasible solutions.

We start the proof by a series of simple observations.

\begin{lemma}\label{lem:feasibility-kept}
If $y^{(\hat{k})}$ is $\epsilon$-feasible solution to \eqref{eq:SICP} for some $\hat{k}$, then for every $k\geq \hat{k}$, $y^{(k)}$ is also $\epsilon$-feasible.
\end{lemma}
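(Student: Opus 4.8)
The plan is to track how the best-known $\epsilon$-feasible solution $y^{(k)}$ evolves from one iteration to the next, and show that $\epsilon$-feasibility, once achieved, is an invariant. The key observation is that $y^{(k)}$ changes only in Step 6, and there it is set to $x^{(k)}$, the current master-problem solution, which the algorithm reaches precisely when Step 4 failed to find a violated inequality.

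First I would argue by induction on $k \geq \hat{k}$, the base case $k = \hat{k}$ being the hypothesis. For the inductive step, assume $y^{(k-1)}$ is $\epsilon$-feasible and consider iteration $k$. There are two cases according to the branch taken. In the \emph{feasibility cut} case (Step 5), the algorithm sets $y^{(k)} = y^{(k-1)}$, so $y^{(k)}$ inherits $\epsilon$-feasibility immediately. In the \emph{optimality cut} case (Step 6), the algorithm sets $y^{(k)} = x^{(k)}$, and this branch is taken only because Step 4 failed to produce a $t^{(k)} \in T$ with $g(x^{(k)}, t^{(k)}) > 0$. By Assumption~\ref{ass:2}, the failure to find such a $t$ means that $x^{(k)}$ cannot be \emph{not} $\epsilon$-feasible — equivalently, $x^{(k)}$ is $\epsilon$-feasible, i.e.\ $g(x^{(k)}, t) \leq \epsilon$ for all $t \in T$. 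Hence $y^{(k)} = x^{(k)}$ is $\epsilon$-feasible in this case as well.

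Since both branches preserve $\epsilon$-feasibility, and every iteration $k > \hat{k}$ falls into one of these two branches (the only other exit from the loop is termination in Step 3, at which point no further $y^{(k)}$ is produced), the induction goes through and $y^{(k)}$ is $\epsilon$-feasible for every $k \geq \hat{k}$.

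I do not anticipate a genuine obstacle here; the statement is essentially a bookkeeping consequence of the algorithm's structure. The one point requiring slight care is the logical direction of Assumption~\ref{ass:2}: it guarantees detection of infeasibility, so the \emph{contrapositive} — non-detection implies $\epsilon$-feasibility — is what licenses the conclusion in the Step 6 case. I would state this contrapositive explicitly to keep the argument airtight. The other minor point is to note that Step 7 (dropping cuts) modifies only $J^{(k)}$, not $y^{(k)}$, so it is irrelevant to the claim.
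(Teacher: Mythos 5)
Your proof is correct and follows essentially the same argument as the paper's: a case split on whether the iteration ends with a feasibility cut (Step 5, where $y^{(k)}=y^{(k-1)}$ inherits $\epsilon$-feasibility) or an optimality cut (Step 6, where the contrapositive of Assumption~\ref{ass:2} gives $\epsilon$-feasibility of $x^{(k)}$). Your explicit induction and the remark about the contrapositive merely spell out what the paper's two-line proof leaves implicit.
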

\begin{proof}
If the point $x^{(k)}$ found in Step 2 is  not $\epsilon$-feasible, then a feasibility cut is found, and in Step 5 $y^{(k)}$ is set to be the last $\epsilon$-feasible solution found. Otherwise $y^{(k)} = x^{(k)}$, set in Step 6, is $\epsilon$-feasible.
\end{proof}

\begin{lemma}\label{lem:core}
Suppose that in the beginning of the $k$th iteration we have $\delta\defeq y^{(k-1)}_0 - x^*_0 > 0$, where $x^*$ is an optimal solution of \eqref{eq:SICP}. Then there exists a $\sigma_0 = \sigma_0(\delta) > 0$ (a function of only $\delta$, but not of $k$), such that in the optimal solution of \eqref{eq:master} in Step 2 we have
\[\sigma^{(k)} \geq \sigma_0(\delta) > 0 .\]
\end{lemma}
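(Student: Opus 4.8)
The plan is to exhibit an explicit feasible solution of the master problem~\eqref{eq:master} with a strictly positive value of $\sigma$ that depends only on $\delta$, since the optimal $\sigma^{(k)}$ can only be larger. The natural candidate is built from the Slater point $\bar x$ of Assumption~\ref{ass:1} (with $\eta = 1$, as normalized in the text) and the optimal solution $x^*$ of~\eqref{eq:SICP}. Specifically, I would take a convex combination $x = (1-\lambda)x^* + \lambda\bar x$ for a suitable $\lambda \in (0,1]$ to be chosen, which lies in $X$ by convexity, and pair it with a value $\sigma$ to be determined. I then need to verify the two families of constraints of~\eqref{eq:master}: the objective-cut constraint $x_0 + \sigma \le y^{(k-1)}_0$, and the accumulated cut constraints $g(x,t^{(j)}) + \sigma s^{(j)} \le 0$ for $j \in J^{(k-1)}$.

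For the objective constraint, note $x_0 = (1-\lambda)x^*_0 + \lambda\bar x_0 \le x^*_0 + \lambda|\bar x_0 - x^*_0|$, and since $y^{(k-1)}_0 - x^*_0 = \delta$, it suffices to have $\sigma \le \delta - \lambda|\bar x_0 - x^*_0|$; bounding $|\bar x_0 - x^*_0|$ by the diameter $R$ of the bounded set $X$, the requirement becomes $\sigma \le \delta - \lambda R$. For the cut constraints, I would use convexity of $g(\cdot,t^{(j)})$ together with the two key facts that each retained cut was generated at a point where it was violated in the sense $g(x^{(j)},t^{(j)}) > 0$, hence (since each cut constraint is an inequality $g(\cdot,t^{(j)})\le 0$ faced by the true optimum) $g(x^*,t^{(j)}) \le 0$, and that $g(\bar x,t^{(j)}) \le -1$ by the Slater condition. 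Convexity gives $g(x,t^{(j)}) \le (1-\lambda)g(x^*,t^{(j)}) + \lambda g(\bar x,t^{(j)}) \le -\lambda$. Since $s^{(j)} \le B$, the cut constraint $g(x,t^{(j)}) + \sigma s^{(j)} \le -\lambda + \sigma B \le 0$ holds as soon as $\sigma \le \lambda/B$.

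Combining the two requirements, I need $\sigma \le \min\{\delta - \lambda R,\ \lambda/B\}$ with $\sigma > 0$, which forces $\lambda < \delta/R$ and $\lambda > 0$; choosing for instance $\lambda = \delta/(2R)$ (or, to be safe when $\delta$ is large, $\lambda = \min\{1, \delta/(2R)\}$) makes both bounds strictly positive, and I can set
\[
\sigma_0(\delta) \defeq \min\Bigl\{\tfrac{\delta}{2},\ \tfrac{\min\{1,\delta/(2R)\}}{B}\Bigr\} > 0,
\]
which depends only on $\delta$ (through $R$ and $B$, which are fixed instance constants) and not on $k$. Then $(x,\sigma_0(\delta))$ is feasible for~\eqref{eq:master}, so $\sigma^{(k)} \ge \sigma_0(\delta)$, as claimed.

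The main obstacle — and the step deserving the most care — is the claim that $g(x^*,t^{(j)}) \le 0$ for every retained cut index $j$: this is precisely where the logic that the added cuts are valid inequalities for the feasible set of~\eqref{eq:SICP} must be made rigorous. Every cut is generated in Step~4/5 at some $t^{(j)} \in T$, and since $x^*$ is feasible for~\eqref{eq:SICP} it satisfies $g(x^*,t)\le 0$ for \emph{all} $t \in T$, in particular for $t = t^{(j)}$; so the claim is immediate once one observes that the algorithm only ever adds cuts of the form $g(\cdot,t^{(j)}) \le 0$ indexed by genuine elements of $T$. A secondary point to check is that cut-dropping in Step~7 does not invalidate the argument — but it cannot, since dropping cuts only enlarges the feasible region of~\eqref{eq:master}, which can only increase $\sigma^{(k)}$. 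One should also confirm that $X$ being bounded and closed legitimately supplies the finite diameter $R$ used above.
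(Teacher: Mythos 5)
Your proposal is correct and follows essentially the same route as the paper: exhibit the convex combination of the Slater point $\bar x$ (normalized so $g(\bar x,t)\le-1$) and the optimum $x^*$ as a feasible pair for the master problem \eqref{eq:master} with $\sigma$ bounded below by a quantity depending only on $\delta$ and the instance constants, using convexity of $g(\cdot,t^{(j)})$, $s^{(j)}\le B$, and feasibility of $x^*$ for all $t\in T$ (hence for every $t^{(j)}$). The only difference is cosmetic: you make the choice of $\lambda$ explicit via the diameter $R$ of $X$, whereas the paper simply takes $\lambda_0$ small enough that $\lambda_0(\bar x_0-x^*_0)\le\delta/2$ and sets $\sigma_0=\min(\lambda_0/B,\delta/2)$.
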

\begin{proof}
Let $\bar{x}$ be the Slater point whose existence is required by Assumption \ref{ass:1}, and consider the points $x_\lambda = \lambda \bar{x} + (1-\lambda) x^*$ for $\lambda\in(0,1]$. Multiplying the constraints involving $g$ by $1/\eta$, we can assume without loss of generality that $\bar{x}$ satisfies $g(\bar{x},t)\leq -1$ for every $t \in T$. Because of the Slater property of $\bar{x}$ and the feasibility of $x^*$, $x_\lambda$ is a feasible solution of \eqref{eq:master} in every iteration for every $\lambda\in(0,1]$, and it satisfies the inequalities
\begin{align*}
g(x_\lambda, t^{(j)}) + \frac{\lambda}{B}s^{(j)} &\leq \lambda g(\bar{x},t^{(j)}) + (1-\lambda)g(x^*,t^{(j)}) + \lambda\\
& = \lambda (g(\bar{x},t^{(j)}) + 1) + (1-\lambda) g(x^*,t^{(j)})\\
& \leq 0 \quad \text{ for all } j\in J^{(0)}\cup J^{(1)} \cup \cdots,
\end{align*}
using the convexity of $g$ and $s^{(j)}\leq B$ in the first inequality and the Slater condition in the second. In the $k$th iteration, if $y^{(k-1)}_0 - x^*_0 = \delta > 0$, then $x_\lambda$ also satisfies the inequality
\[ y^{(k-1)}_0 - (x_\lambda)_0 = (x^*_0 + \delta) - (\lambda \bar{x}_0 + (1-\lambda) x^*_0) = \delta - \lambda (\bar{x}_0-x^*_0) \geq \delta/2 \]
for every $\lambda>0$ sufficiently small to satisfy $0\leq\lambda(\bar{x}_0-x^*_0)\leq\delta/2$.

Denoting by $\lambda_0$ such a sufficiently small value of $\lambda$, and letting
\[\sigma_0 \defeq \min(\lambda_0/B, \delta/2),\]
we conclude that the pair $(x_{\lambda_0}, \sigma_0)$ is a feasible solution to \eqref{eq:master}, hence the optimal solution to \eqref{eq:master} also satisfies $\sigma^{(k)} \geq \sigma_0 > 0$.
\end{proof}

Our final lemma is required only for the proof of \mbox{Theorem \ref{thm:3}}.

\begin{lemma}\label{lem:sigma-conv-0}
Suppose that \mbox{Algorithm \ref{alg:CONCUSSION}} does \emph{not} terminate. Then the sequence $(\sigma^{(k)})_{k=1,2,\dots}$ decreases monotonically to zero, and the sequence $(y^{(k)}_0)_{k=1,2,\dots}$ is also monotone decreasing.
\end{lemma}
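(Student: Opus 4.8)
My plan is to verify the two monotonicity claims in turn and then deduce $\sigma^{(k)}\to 0$ from them together with the boundedness of $X$. Throughout, recall that since the algorithm does not terminate, Step 3 never fires, so $\sigma^{(k)}>0$ for every $k$; and write $D_k$ for the index set $D$ computed in Step 7 of iteration $k$.

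\emph{Monotonicity of $y_0^{(k)}$.} This is immediate from the update rules. In Step 6 one sets $y^{(k)}=x^{(k)}$, and since $(x^{(k)},\sigma^{(k)})$ satisfies the first constraint of \eqref{eq:master} we have $x_0^{(k)}\le y_0^{(k-1)}-\sigma^{(k)}\le y_0^{(k-1)}$; in Step 5 one sets $y^{(k)}=y^{(k-1)}$. Hence $y_0^{(k)}\le y_0^{(k-1)}$ in every iteration.

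\emph{Monotonicity of $\sigma^{(k)}$.} I would prove $\sigma^{(k+1)}\le\sigma^{(k)}$ for every $k$ by induction, comparing the feasible sets $F_k$ and $F_{k+1}$ of the master problems \eqref{eq:master} solved in iterations $k$ and $k+1$. Passing from $F_k$ to $F_{k+1}$ amounts to: replacing the bound $y_0^{(k-1)}$ by $y_0^{(k)}\le y_0^{(k-1)}$ (which shrinks the set), possibly adjoining the cut indexed by $k$ (which shrinks the set), and deleting the cuts indexed by $D_k$ (which could, a priori, enlarge the set). If $D_k=\emptyset$ then $F_{k+1}\subseteq F_k$ and $\sigma^{(k+1)}\le\sigma^{(k)}$ at once. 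For $j\in D_k$ we know $\sigma^{(j)}\ge\beta\sigma^{(k)}$ and $g(x^{(k)},t^{(j)})+\sigma^{(k)}s^{(j)}<0$; using these facts together with the induction hypothesis, the convexity of $g(\cdot,t^{(j)})$, and the uniform subgradient bound $B$ of Assumption \ref{ass:1}, I would argue that every $(x,\sigma)\in F_{k+1}$ with $\sigma\ge\sigma^{(k)}$ still satisfies $g(x,t^{(j)})+\sigma s^{(j)}\le 0$, so that $F_{k+1}\cap\{\sigma\ge\sigma^{(k)}\}\subseteq F_k$. Since $\sigma^{(k+1)}$ is the largest value of $\sigma$ attained on $F_{k+1}$, this inclusion forces $\sigma^{(k+1)}\le\sigma^{(k)}$, completing the induction. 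This dropped-cut step — which must exploit the precise form of the dropping rule in Step 7 (the threshold $\sigma^{(j)}\ge\beta\sigma^{(k)}$ with $\beta>1$, and the strict slackness at the current center) — is the only place where genuine work is required, and I expect it to be the main obstacle.

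\emph{Convergence to zero.} By the above, $(\sigma^{(k)})_k$ is non-increasing and bounded below by $0$, so $\sigma^{(k)}\downarrow\sigma^\ast$ for some $\sigma^\ast\ge 0$; suppose for contradiction that $\sigma^\ast>0$. If only finitely many feasibility cuts are ever taken, then beyond some iteration every step is an optimality cut, so $y^{(k)}=x^{(k)}$ and $y_0^{(k)}=x_0^{(k)}\le y_0^{(k-1)}-\sigma^{(k)}\le y_0^{(k-1)}-\sigma^\ast$; iterating gives $y_0^{(k)}\to-\infty$, impossible since $x^{(k)}\in X$ and $X$ is bounded. Otherwise infinitely many feasibility cuts occur; since $\sigma^{(j)}\downarrow\sigma^\ast<\beta\sigma^\ast\le\beta\sigma^{(k)}$ for all $k$, there is an index $\hat k$ such that no cut created at an iteration $\ge\hat k$ ever satisfies the threshold $\sigma^{(j)}\ge\beta\sigma^{(k)}$, hence such cuts are never dropped and remain in $J^{(\ell)}$ for all later $\ell$. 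Picking feasibility-cut iterations $j_1<j_2<\cdots$ all $\ge\hat k$, for $m<m'$ the constraint $g(x,t^{(j_m)})+\sigma s^{(j_m)}\le 0$ is present in the iteration-$j_{m'}$ master, so $g(x^{(j_{m'})},t^{(j_m)})\le -\sigma^{(j_{m'})}s^{(j_m)}$, whereas $g(x^{(j_m)},t^{(j_m)})>0$ by Step 4. With $\sigma^{(j_{m'})}\ge\sigma^\ast>0$ and $s^{(j_m)}\ge s_{\min}>0$ (or, under the alternative centering rule, using $g(x^{(j_{m'})},t^{(j_m)})\ge g(x^{(j_m)},t^{(j_m)})+(d^{(j_m)})^{\T}(x^{(j_{m'})}-x^{(j_m)})$ together with $s^{(j_m)}=\alpha\|d^{(j_m)}\|$ and $d^{(j_m)}\neq 0$), the uniform $B$-Lipschitz bound on $g(\cdot,t^{(j_m)})$ forces $\|x^{(j_m)}-x^{(j_{m'})}\|$ to exceed a fixed positive constant for all $m\neq m'$. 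Then $\{x^{(j_m)}\}_m$ is an infinite, uniformly separated subset of the compact set $X$ — a contradiction. Hence $\sigma^\ast=0$, i.e. $\sigma^{(k)}\to 0$.
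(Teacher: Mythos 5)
Your final part (the proof that $\sigma^{(k)}\to 0$) is sound and is essentially the paper's own argument, but both monotonicity steps have problems. First, the inference ``Step 3 never fires, hence $\sigma^{(k)}>0$'' is invalid: Step 3 tests only whether $\sigma^{(k)}=0$, so non-termination gives $\sigma^{(k)}\neq 0$, not $\sigma^{(k)}\geq 0$; and you need $\sigma^{(k)}\geq 0$ for the inequality $x_0^{(k)}\leq y_0^{(k-1)}-\sigma^{(k)}\leq y_0^{(k-1)}$ that drives the monotonicity of $y_0^{(k)}$. Nonnegativity must be argued by exhibiting a feasible point of \eqref{eq:master} with $\sigma=0$ (the paper uses the pair $(x^*,0)$, which satisfies every cut because $x^*$ is feasible for \eqref{eq:SICP}); it does not follow from the stopping test.

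The more serious gap is exactly where you flag it: monotonicity of $\sigma^{(k)}$ when cuts are dropped. You propose to show that every $(x,\sigma)\in F_{k+1}$ with $\sigma\geq\sigma^{(k)}$ still satisfies each dropped cut, ``using convexity and the subgradient bound $B$'', but strict inactivity of a dropped cut at the single point $(x^{(k)},\sigma^{(k)})$ together with a Lipschitz bound gives no control on $g(x,t^{(j)})$ at points $x$ far from $x^{(k)}$, so the pointwise implication you want cannot be obtained along that route, and you never actually carry it out. The argument that works (and is what the paper's one-line justification ``only inactive cuts are dropped'' rests on) is a segment argument using joint convexity of the constraints in $(x,\sigma)$: if some $(x',\sigma')\in F_{k+1}$ had $\sigma'>\sigma^{(k)}$, note that both $(x^{(k)},\sigma^{(k)})$ and $(x',\sigma')$ satisfy the first constraint of the iteration-$k$ master (since $y_0^{(k)}\leq y_0^{(k-1)}$), all retained cuts of $J^{(k-1)}\setminus D_k$, and $x\in X$; hence so does every point of the segment joining them, and points of the segment sufficiently close to $(x^{(k)},\sigma^{(k)})$ also satisfy the dropped cuts, because those are \emph{strictly} slack at $(x^{(k)},\sigma^{(k)})$ and the constraint functions are convex along the segment. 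Such points are feasible for the iteration-$k$ master with $\sigma>\sigma^{(k)}$, contradicting the optimality of $\sigma^{(k)}$; this uses only strict slackness at the current optimizer and convexity, not the bound $B$, and no induction is needed. With that step repaired, your case analysis for $\sigma^{(k)}\to0$ (finitely versus infinitely many feasibility cuts, the observation that cuts created beyond $\hat k$ are never dropped, and the separation bound $\|x^{(j_m)}-x^{(j_{m'})}\|\geq\sigma^\ast s_{\min}/B$, respectively $\alpha\sigma^\ast$) goes through as written and coincides with the paper's proof.
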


\begin{proof}
For every $k$, $\sigma^{(k)} \geq 0$, because the pair $(x,\sigma) = (x^*,0)$ is a feasible solution in each iteration. From this, and the first inequality of \eqref{eq:master}, the monotonicity of $(y^{(k)}_0)_{k=1,2,\dots}$ follows.

Since $(y^{(k)}_0)_{k=1,2,\dots}$ is monotone decreasing and only inactive cuts are dropped from \eqref{eq:master} in Step 7, the sequence $(\sigma^{(k)})_{k=1,2,\dots}$ is monotone non-increasing. Therefore $(\sigma^{(k)})_{k=1,2,\dots}$ is convergent.

Let us assume (by contradiction) that $\sigma^{(k)} \searrow \sigma_0 > 0$. Then for a sufficiently large $\hat{k}$, $\sigma^{(k)} < \sigma_0\beta$ for every $k\geq\hat{k}$, implying that no cuts are dropped in Step 7 beyond the $\hat{k}$th iteration. Consider the optimal $x^{(j)}$ and $x^{(k)}$ obtained in Step 2 of the $j$th and $k$th iteration, with $k>j\geq \hat{k}$. There are two cases, based on whether a feasibility cut $g(x^{(j)},t^{(j)})>0$ is found in Step 4 of the $j$th iteration or not.

If a feasibility cut is not found in the $j$th iteration, then
\[ x^{(k)}_0 = y^{(k-1)}_0 - \sigma^{(k)} \leq y^{(j)}_0 - \sigma^{(k)} = x^{(j)}_0 - \sigma^{(k)}\]
follows from the first constraint of \eqref{eq:master} in the $k$th iteration, therefore
\[ \|x^{(k)}-x^{(j)}\| \geq \sigma^{(k)} \geq \sigma_0. \]

If a feasibility cut is found in the $j$th iteration, then on one hand we have
\[ g(x^{(j)},t^{(j)}) > 0, \]
and because this cut is not dropped later on, from \eqref{eq:master} in the $k$th iteration we also have
\[ g(x^{(k)},t^{(j)}) + \sigma^{(k)} s^{(j)} \leq 0. \]
From these two inequalities we obtain
\[ 0 \leq \sigma_0 s^{(j)} \leq \sigma^{(k)} s^{(j)} < g(x^{(j)},t^{(j)}) - g(x^{(k)},t^{(j)}) \leq -(d^{(j)})^\mathrm{T}(x^{(k)}-x^{(j)}) \leq \|d^{(j)}\| \cdot \|x^{(k)}-x^{(j)}\| \]
for every $d^{(j)}\in\partial_x g(x^{(j)},t^{(j)})$,
using the convexity of $g(\cdot,t^{(j)})$ and the Cauchy-Schwarz inequality. Note that the strict inequality implies $d^{(j)}\neq 0$. Comparing the left and right-hand sides we obtain
\[ \sigma_0 s^{(j)} / \|d^{(j)}\| < \|x^{(k)}-x^{(j)}\|. \]
From this inequality it follows that as long as the centering parameters $s^{(j)}$ are bounded away from zero and $\|d^{(j)}\|$ is bounded (as assumed), we have a $\sigma_1>0$ independent of $j$ and $k$ satisfying $\sigma_1 < \|x^{(k)}-x^{(j)}\|$.

In summary, regardless of whether we add a feasibility or an optimality cut in iteration $j$, we have that for every $k>j\geq \hat{k}$,
\[ \|x^{(k)}-x^{(j)}\| \geq \min(\sigma_0, \sigma_1) > 0, \]
contradicting the assumption that the sequence $(x^{(k)})_{k=1,2,\dots}$ is bounded, and therefore has an accumulation point.
\end{proof}

With these lemmas, we are ready to prove our main theorems.

\begin{proof}[Proof of Theorem \ref{thm:1}]
Suppose that the algorithm terminates in the $k$th iteration. First assume by contradiction that $y^{(k-1)}$ is not an $\epsilon$-feasible solution to \eqref{eq:SICP}. Then by \mbox{Lemma \ref{lem:feasibility-kept}}, none of the points $y^{(0)},\dots,y^{(k-2)}$ are $\epsilon$-feasible, therefore the upper bound in the first constraint of \eqref{eq:master} is $y^{(k-1)}_0 = U$ (a strict upper bound on the optimum) in every iteration. Hence, by \mbox{Lemma \ref{lem:core}}, $\sigma^{(k)} > 0$, contradicting the assumption that the algorithm terminated.
Therefore $y^{(k-1)}$ is $\epsilon$-feasible.

Now suppose that $y^{(k-1)}$ is $\epsilon$-feasible, but it is not $\epsilon$-optimal, that is, $y^{(k-1)}>x^*_0$. Then by \mbox{Lemma \ref{lem:core}} we have $\sigma^{(k)}>0$ for every $k$, contradicting the assumption that the algorithm terminated.
\end{proof}

\begin{proof}[Proof of Theorem \ref{thm:2}]
Using \mbox{Lemma \ref{lem:feasibility-kept}} it is sufficient to show that at least one $y^{(k)}$ is $\epsilon$-feasible. Suppose otherwise, then no $x^{(k)}$ or $y^{(k)}$ obtained throughout the algorithm is $\epsilon$-feasible. Therefore, the upper bound on the first constraint of \eqref{eq:master} remains $y^{(k-1)} = U$ (a strict upper bound on the optimum) in every iteration. Invoking \mbox{Lemma \ref{lem:core}} we have that $\sigma^{(k)} \geq \sigma_0(U-x^*_0) > 0$, contradicting Lemma \ref{lem:sigma-conv-0}.
\end{proof}

\begin{proof}[Proof of Theorem \ref{thm:3}]
The compactness of the feasible set of \eqref{eq:SICP} implies that if the algorithm does not terminate, then the sequence $(x^{(k)})_{k=1,2,\dots}$ has at least one accumulation point, and so does its subsequence $(y^{(k)})_{k=1,2,\dots}$. From Theorem \ref{thm:2} we also know that this sequence eventually consists entirely of $\epsilon$-feasible points, therefore every accumulation point of the sequence $(y^{(k)})_{k=1,2,\dots}$ is also $\epsilon$-feasible (using that the set of $\epsilon$-feasible solutions is also compact).

Let $\hat{y}$ be one of the accumulation points, and suppose by contradiction that $\hat{y}$ is not $\epsilon$-optimal, that is, $\hat{y}_0 > x^*_0$. Let $\delta = (\hat{y}_0 - x^*_0)/2$, where $x^*$ denotes, as before, an optimal solution to \eqref{eq:SICP}. Using Lemma \ref{lem:feasibility-kept} and the assumption $\delta>0$, there exists a sufficiently large $\hat{k}$ such that for every $k>\hat{k}$, $y^{(k)}$ is an $\epsilon$-feasible solution to \eqref{eq:SICP}, and $y^{(k-1)}_0 \geq x^*_0+\delta$. Invoking Lemma \ref{lem:core} we find that in this case there exists a $\sigma_0 > 0$ such that $\sigma^{(k)} \geq \sigma_0$ for every $k > \hat{k}$, contradicting \mbox{Lemma \ref{lem:sigma-conv-0}}.
\end{proof}

\subsection{Rate of convergence} \label{sec:rateofconvergence}

Recall that throughout the cutting surface algorithm, the sequence $\sigma^{(k)}$ decreases monotonically, and converges to zero (Lemma \ref{lem:sigma-conv-0}). In this section we show that the method converges linearly between feasibility cuts, beyond the first iteration $\hat k$ that satisfies $\sigma^{(\hat k)}<\eta/B$. This matches the rate of convergence of similar cutting plane methods. Interestingly, the analysis can be done in a considerably simpler manner than for the (Kortanek--No) central cutting plane method.

\begin{theorem}\label{thm:rate-of-convergence}
Algorithm \ref{alg:CONCUSSION} converges linearly in objective function value between consecutive feasibility cuts, beyond the first iteration $\hat k$ that satisfies $\sigma^{(\hat k)}<\eta/B$.
\end{theorem}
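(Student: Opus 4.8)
The plan is to turn the qualitative lower bound of Lemma \ref{lem:core} into a bound that is \emph{linear} in the current optimality gap, and to recognize that the hypothesis $\sigma^{(\hat k)}<\eta/B$ is exactly what makes the linear branch of that bound the active one. Write $\delta^{(k)}\defeq y^{(k)}_0-x^*_0$ for the gap of the best $\epsilon$-feasible point after iteration $k$ (where $x^*$ denotes an optimal solution of \eqref{eq:SICP}; by feasibility of the Slater point $\bar x$ we have $\bar x_0\ge x^*_0$). From the first constraint of \eqref{eq:master} we get $x^{(k)}_0\le y^{(k-1)}_0-\sigma^{(k)}$, so
\[
\delta^{(k)}\;\le\;\delta^{(k-1)}-\sigma^{(k)}\ \text{ at an optimality cut}\ (y^{(k)}=x^{(k)}),\qquad \delta^{(k)}=\delta^{(k-1)}\ \text{ at a feasibility cut}\ (y^{(k)}=y^{(k-1)}).
\]
Thus the theorem reduces to establishing a fixed $\gamma\in(0,1]$ with $\sigma^{(k)}\ge\gamma\,\delta^{(k-1)}$ for all $k\ge\hat k$: each optimality-cut iteration past $\hat k$ then contracts the gap by the factor $1-\gamma$, feasibility cuts leave it unchanged, and iterating between two consecutive feasibility cuts yields geometric decay in the number of intervening optimality cuts — i.e.\ linear convergence.

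To obtain such a $\gamma$, I would rerun the construction in the proof of Lemma \ref{lem:core}, but \emph{optimize} the homotopy parameter $\lambda$ instead of merely taking it small. For $x_\lambda=\lambda\bar x+(1-\lambda)x^*$, convexity and the (un-normalized) Slater bound $g(\bar x,t)\le-\eta$ give $g(x_\lambda,t^{(j)})\le-\lambda\eta$, so $(x_\lambda,\sigma)$ satisfies the cut constraints of \eqref{eq:master} whenever $\sigma s^{(j)}\le\lambda\eta$, for which $\sigma\le\lambda\eta/B$ suffices (since $s^{(j)}\le B$); and it satisfies the first constraint whenever $\sigma\le\delta-\lambda(\bar x_0-x^*_0)$ with $\delta=y^{(k-1)}_0-x^*_0$. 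Maximizing $\min\bigl(\lambda\eta/B,\ \delta-\lambda(\bar x_0-x^*_0)\bigr)$ over $\lambda\in(0,1]$ — the two terms balance at $\lambda^\star=\delta/(\eta/B+\bar x_0-x^*_0)$ — gives the sharpened, iteration-independent bound
\[
\sigma^{(k)}\;\ge\;\min\!\bigl(\gamma\,\delta,\ \eta/B\bigr),\qquad \gamma\defeq\frac{\eta}{\eta+B(\bar x_0-x^*_0)}\in(0,1],
\]
with $\gamma=1$ only in the degenerate case $\bar x_0=x^*_0$, in which a single optimality cut past $\hat k$ already drives the gap to zero.

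Finally I would bring in $\hat k$ via Lemma \ref{lem:sigma-conv-0}: when the algorithm does not terminate, $\sigma^{(k)}$ decreases monotonically to $0$, so some first iteration $\hat k$ satisfies $\sigma^{(\hat k)}<\eta/B$, and hence $\sigma^{(k)}<\eta/B$ for all $k\ge\hat k$. For such $k$ the minimum in the sharpened bound cannot be the constant branch $\eta/B$, so it must be $\sigma^{(k)}\ge\gamma\,\delta^{(k-1)}$ — exactly the inequality needed above. Combining with the first step, $\delta^{(k)}\le(1-\gamma)\delta^{(k-1)}$ at every optimality cut with $k\ge\hat k$ and $\delta^{(k)}=\delta^{(k-1)}$ at feasibility cuts, which is the asserted linear rate between consecutive feasibility cuts (and a global linear rate if only finitely many feasibility cuts occur). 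The only real obstacle is the quantitative refinement of Lemma \ref{lem:core}: carrying out the little max–min over $\lambda$ and, above all, seeing that its ``$\min(\cdot,\eta/B)$'' cap is precisely the obstruction removed by $\sigma^{(\hat k)}<\eta/B$; the remainder is bookkeeping with the two constraints of \eqref{eq:master} and the monotonicity already proved. One should also note the argument only concerns iterations with $\delta^{(k-1)}>0$, since once $\delta^{(k)}\le 0$ the point $y^{(k)}$ is already $\epsilon$-optimal.
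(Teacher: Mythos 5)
Your proposal is correct, but it takes a genuinely different route from the paper's. You argue entirely on the primal side: you sharpen Lemma \ref{lem:core} by optimizing the homotopy parameter $\lambda$ in $x_\lambda=\lambda\bar x+(1-\lambda)x^*$, obtaining the explicit bound $\sigma^{(k)}\ge\min\bigl(\gamma\,\delta^{(k-1)},\,\eta/B\bigr)$ with $\gamma=\eta/(\eta+B(\bar x_0-x^*_0))$, and then use the monotonicity of $\sigma^{(k)}$ (Lemma \ref{lem:sigma-conv-0}) together with $\sigma^{(\hat k)}<\eta/B$ to force the linear branch, so every optimality-cut iteration past $\hat k$ contracts the gap by the factor $1-\gamma$. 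The paper instead argues through the dual of the master problem \eqref{eq:master}: with $\mu_0^{(k)}$ the multiplier of the first constraint, the dual constraint $\mu_0^{(k)}+\sum_j s^{(j)}\mu_j^{(k)}=1$ and a Lagrangian bound evaluated at $x^*$ give the contraction \eqref{eq:rate}, $\tfrac{y_0^{(k)}-x^*_0}{y_0^{(k-1)}-x^*_0}\le 1-\mu_0^{(k)}$, and $\mu_0^{(k)}$ is bounded below via the Slater point by \eqref{eq:boundmu-3}. The two arguments yield essentially the same constant: the paper's lower bound $\frac{\eta-B\sigma^{(k)}}{\eta+B(\bar x_0-x^*_0)}$ tends to your $\gamma$ as $\sigma^{(k)}\to0$, and your uniform factor $1-\gamma$ is in fact at least as sharp for all $k\ge\hat k$. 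What your route buys is elementarity: no dual multipliers, no appeal to strong duality for the master, and no need for the paper's normalization that the first constraint of \eqref{eq:master} is active at the optimum (Eq.~\eqref{eq:sigmaactive}), since the inequality $x_0^{(k)}\le y_0^{(k-1)}-\sigma^{(k)}$ is all you use, and in the right direction. What the paper's route buys is a per-iteration rate $1-\mu_0^{(k)}$ expressed through a computable dual quantity, in the tradition of central cutting plane analyses. One small point to spell out in a polished write-up: the maximization of $\min\bigl(\lambda\eta/B,\;\delta-\lambda(\bar x_0-x^*_0)\bigr)$ over $\lambda\in(0,1]$ requires the case $\lambda^\star>1$, where the optimum is $\eta/B$ rather than $\gamma\delta$; your stated bound $\min(\gamma\delta,\eta/B)$ covers both cases, so nothing breaks, but the case split should be made explicit.
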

\begin{proof}
Consider the master problem \eqref{eq:master} and its dual in iteration $k$. Let $\mu_0^{(k)}$ be the optimal value of the dual variable associated with the first constraint, and let $\mu_j^{(k)}$ be the optimal value of the dual variable associated with the constraint corresponding to the index $j\in J^{(k-1)}$.

Without loss of generality it can be assumed that $x_0$, the objective of \eqref{eq:SICP}, is only bounded explicitly from below by constraints in \eqref{eq:SICP}, and therefore the first constraint in the master problem \eqref{eq:master} is always active at the optimum:
\begin{equation}\label{eq:sigmaactive}
\sigma^{(k)} = y_0^{(k-1)}-x_0^{(k)}.
\end{equation}
The dual constraint corresponding to the primal variable $\sigma$ gives
\begin{equation}\label{eq:dual-1}
\mu_0^{(k)} + \sum_{j\in J^{(k-1)}} s^{(j)}\mu_j^{(k)} = 1.
\end{equation}
Using this equation and the optimality of the primal and dual solutions we have that for every $x\in X$ and every $\sigma$,
\begin{subequations}\label{eq:sigma-123}
\begin{align}
\sigma^{(k)} &\geq \sigma - \mu_0^{(k)}(x_0+\sigma-y_0^{(k-1)}) - \sum_{j} \mu_j^{(k)}\big(g(x,t^{(j)}) + \sigma s^{(j)}\big) \label{eq:sigma-1} \\
 &= \mu_0^{(k)}(y_0^{(k-1)}-x_0) - \sum_{j} \mu_j^{(k)}g(x,t^{(j)}) \label{eq:sigma-2}\\
 &\geq \mu_0^{(k)}(y_0^{(k-1)}-x_0). \label{eq:sigma-3}
\end{align}
\end{subequations}

Suppose now that in this iteration the master problem yields an $\epsilon$-feasible solution $x^{(k)}$. Then $y^{(k)} = x^{(k)}$, and Eq.~\eqref{eq:sigmaactive} together with  \eqref{eq:sigma-123} yields
\begin{equation*}
y_0^{(k-1)}-y_0^{(k)} = y_0^{(k-1)}-x_0^{(k)} = \sigma^{(k)} \geq \mu_0^{(k)}(y_0^{(k-1)}-x_0)
\end{equation*}
for every $x\in X$, and specifically for the optimal $x^*$,
\begin{equation*}
y_0^{(k-1)}-y_0^{(k)} = (y_0^{(k-1)} - x^*_0) + (x^*_0 -y_0^{(k)}) \geq \mu_0^{(k)}(y_0^{(k-1)}-x^*_0).
\end{equation*}
Since that $y_0^{(k-1)}$ was not yet optimal, we can divide by $y_0^{(k-1)}-x^*_0 > 0$, which leads to
\begin{equation}\label{eq:rate}
\frac{y_0^{(k)} - x^*_0}{y_0^{(k-1)}-x^*_0} \leq 1- \mu_0^{(k)}.
\end{equation}
From this inequality we immediately have linear convergence in the objective value (between feasibility cuts) provided that we can bound $\mu_0^{(k)}$ away from zero.

To bound $\mu_0^{(k)}$ from below, let us use the notation $M^{(k)} = \sum_{j\in J^{(k-1)}} \mu_j^{(k)}$, and recall \eqref{eq:dual-1} and $s^{(j)}\leq B$. These inequalities imply
\begin{equation}\label{eq:boundmu-1}
\mu_0^{(k)} = 1 - \sum_{j\in J^{(k-1)}} s^{(j)}\mu_j^{(k)} \geq 1 - \sum_{j\in J^{(k-1)}}B \mu_j^{(k)} =  1 - B M^{(k)}.
\end{equation}
Another lower bound can be obtained by substituting the Slater point $\bar{x}$ into \eqref{eq:sigma-2}, and using $g(\bar{x},t^{(j)}) \leq -\eta$:
\begin{equation*}
\sigma^{(k)} \geq \mu_0^{(k)}(y_0^{(k-1)}-\bar{x}_0) - \sum_{j} \mu_j^{(k)}g(\bar{x},t^{(j)}) \geq \mu_0^{(k)}(y_0^{(k-1)}-\bar{x}_0) + \eta M^{(k)} \geq \mu_0^{(k)}(x_0^*-\bar{x}_0) + \eta M^{(k)},
\end{equation*}
which yields
\begin{equation}\label{eq:boundmu-2}
\mu_0^{(k)} \geq \frac{\eta M^{(k)}-\sigma^{(k)}}{\bar{x}_0-x_0^*},
\end{equation}
Taking a linear combination of \eqref{eq:boundmu-1} and \eqref{eq:boundmu-2} with coefficients $\eta>0$ and $B(\bar{x}_0-x^*_0)>0$ eliminates $M^{(k)}$ from the lower bound:
\begin{equation}\label{eq:boundmu-3}
\mu_0^{(k)} \geq \frac{\eta-B\sigma^{(k)}}{\eta+B(\bar{x}_0-x_0^*)}.
\end{equation}
The denumerator on the right is always positive. Since, by assumption, the numerator is bounded away from zero beyond iteration $\hat k$, so is the sequence $\mu_0^{(k)}$, which is what we needed in the inequality \eqref{eq:rate} to complete the proof.
\end{proof}

\deletethis{
\noindent\textbf{Remark.} It is important not to read too much into this rate of convergence result, and specifically into the estimate \eqref{eq:rate}. For example, based on \eqref{eq:rate}, it is compelling to argue that the convergence is superlinear provided that we can enforce $\mu_0^{(k)}=1$, and \eqref{eq:dual-1} suggests that this is achieved if we choose $s^{(j)}=0$ for every $j$, which corresponds to no centering in the algorithm. But the conclusion of Theorem \ref{thm:rate-of-convergence} is moot in this case: with no centering, every iteration of the cutting surface method yields an infeasible point, and no optimality cuts are generated throughout the algorithm. Even with centering, if most iterations end with a feasibility cut, it is hardly meaningful to claim that ``the algorithm has linear convergence''. This is no different for the central cutting plane algorithm, which also does not converge linearly.
}

Cutting methods in general, and our central cutting surface method in particular,
update the best feasible (or in our case, $\epsilon$-feasible) solution found only in those iterations that add an optimality cut to the master problem,
while in the remaining iterations, when a feasibility cut is found, it is the feasible set that gets updated. Therefore, it is difficult to compare the rate of convergence of these methods to the rate of convergence of feasible methods, where the best feasible solution is updated in every iteration, and the rate of convergence of the sequence of objective values can be directly studied. 

\section{Applying the central cutting surface algorithm to moment robust optimization}\label{sec:CG}

Our aim in this section is to show that Algorithm \ref{alg:CONCUSSION}, in combination with a randomized column generation
method, is applicable to solving \eqref{eq:DRO} for every objective $h$ that is convex in $x$ (for every $\xi\in\Xi$) as long as
the set $X$ is convex and bounded, and $\mathfrak{P}$ is defined by \eqref{eq:defP}, through lower and upper bounds $(\ell_i,
u_i)$ on some (not necessarily polynomial) moments $\int_\Xi f_i(\xi) P(d\xi)$ of $P$. Bounds can be imposed on moments of
arbitrary order, not only on the first and second moments. The randomized column generation method, presented in Section
\ref{sec:randomized_column_generation}, is an extension of the authors' earlier scenario generation algorithm for stochastic
programming \citep{MehrotraPapp2013}.

We might also consider optimization problems with \emph{robust stochastic constraints}, that is, constraints of the form
\[\mathbb{E}_P[G(x)] \leq 0\quad \forall\,P\in\mathfrak{P} \]
with some convex function $G$. The algorithm presented in this section is applicable verbatim to such problems, but to keep the presentation simple, we consider only the simpler form, \eqref{eq:DRO}. However, we provide a numerical example of our method applied to robust stochastic constraints in Example \ref{ex:3}.

Without loss of generality we shall assume that $f_1$ is the constant one function, and $\ell_1=u_1=1$. We will also use the shorthand $f$ for the vector-valued function $(f_1,\dots,f_N)^\T$.

Our first observation is that while searching for an $\epsilon$-optimal $P$ in \eqref{eq:maxP}, it is sufficient to consider finitely supported distributions.

\begin{theorem}
For every $\epsilon > 0$, the optimization problem \eqref{eq:maxP} has an $\epsilon$-optimal distribution supported on not more than $N+2$ points.
\end{theorem}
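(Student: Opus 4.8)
The claim is that the supremum in \eqref{eq:maxP}, which is a linear functional of $P$ over the set $\mathfrak{P}$ defined by $N$ moment inequalities $\int_\Xi f_i\,dP\in[\ell_i,u_i]$ (with $f_1\equiv 1$, $\ell_1=u_1=1$ encoding that $P$ is a probability measure), is attained to within $\epsilon$ by a measure supported on at most $N+2$ points. The natural route is a standard moment-space / Carathéodory argument: consider the map $\Phi\colon P\mapsto\bigl(\int_\Xi f_1\,dP,\dots,\int_\Xi f_N\,dP,\int_\Xi h(x,\xi)\,dP(\xi)\bigr)\in\real^{N+1}$, so that $\mathfrak{P}$ gets mapped into a convex subset of $\real^{N+1}$ and \eqref{eq:maxP} asks to maximize the last coordinate. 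The set of achievable vectors $\Phi(P)$ over all probability measures $P$ on $\Xi$ is exactly the convex hull of the curve $\{(f_1(\xi),\dots,f_N(\xi),h(x,\xi)) : \xi\in\Xi\}\subseteq\real^{N+1}$, because evaluating $\Phi$ at a Dirac mass $\delta_\xi$ gives the point on the curve, and mixtures of measures map to convex combinations.

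First I would fix an arbitrary $\epsilon>0$ and pick a feasible $P_\epsilon\in\mathfrak{P}$ that is within $\epsilon$ of the supremum (one exists by definition of supremum; if the sup is $+\infty$ the statement needs the boundedness hypotheses on $h$ and $\Xi$, but under Assumption~\ref{ass:1} / the standing assumptions of Section~\ref{sec:CG} the integral is bounded, so the sup is finite). The point $v=\Phi(P_\epsilon)$ lies in $\conv(C)$ where $C$ is the above curve in $\real^{N+1}$. By Carathéodory's theorem, $v$ is a convex combination of at most $N+2$ points of $C$, say $v=\sum_{j=1}^{N+2}\lambda_j\bigl(f(\xi_j),h(x,\xi_j)\bigr)$ with $\lambda_j\ge 0$, $\sum\lambda_j=1$. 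Then the finitely supported measure $Q=\sum_{j=1}^{N+2}\lambda_j\delta_{\xi_j}$ satisfies $\Phi(Q)=v=\Phi(P_\epsilon)$; in particular $\int f_i\,dQ=\int f_i\,dP_\epsilon\in[\ell_i,u_i]$ for all $i$ (so $Q\in\mathfrak{P}$, and the $f_1$ constraint guarantees $Q$ is a probability measure), and $\int h(x,\cdot)\,dQ=\int h(x,\cdot)\,dP_\epsilon$, which is within $\epsilon$ of the optimum. Hence $Q$ is an $\epsilon$-optimal distribution on at most $N+2$ points.

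The main technical point to get right — the only thing beyond a bald invocation of Carathéodory — is the justification that the achievable set $\Phi(\{\text{prob.\ measures on }\Xi\})$ really equals $\conv(C)$ and, more delicately, that Carathéodory applies cleanly: Carathéodory gives a representation of any point of $\conv(C)$ by at most $\dim(\real^{N+1})+1=N+2$ points of $C$, which is exactly the bound claimed. One subtlety is whether we need $C$ to be closed/compact: we do not, because we only need $v\in\conv(C)$ (not $\overline{\conv}(C)$), and $v$ is manifestly in $\conv(C)$ as the image of a genuine probability measure — one can approximate $P_\epsilon$ by finitely supported measures $\sum\mu_\ell\delta_{\eta_\ell}$ whose $\Phi$-images converge to $v$, each image lying in $\conv(C)$, but since we are allowed the $\epsilon$-slack we can even just take such an approximant directly with enough atoms and then thin it via Carathéodory. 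I expect the cleanest writeup simply takes $P_\epsilon$ finitely supported from the start (discretize $\Xi$ finely; the continuity of $h$ and $f$ on the compact $\Xi$ makes the moments and objective change by less than any prescribed amount), so that $v\in\conv(C)$ is immediate with $C$ finite, and then Carathéodory does the rest. No step is genuinely hard; the care is entirely in phrasing the reduction to a finitely supported near-optimizer before quoting Carathéodory.
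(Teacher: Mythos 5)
Your argument is essentially the paper's own proof: map a near-optimal feasible distribution to its $(N+1)$-dimensional vector of moments and objective value, observe that this vector lies in the convex hull of the curve $\{(f_1(\xi),\dots,f_N(\xi),h(x,\xi))\colon\xi\in\Xi\}$, and apply Carath\'eodory's theorem to obtain a representing measure on at most $N+2$ points with the same moments (hence feasible) and the same objective value (hence $\epsilon$-optimal). The only difference is that you explicitly justify membership in the convex hull (via compactness of $\Xi$, or discretization using the $\epsilon$-slack), a point the paper simply asserts, so your write-up is, if anything, slightly more careful on the same route.
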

\begin{proof}
For every $z\in\real$, the set \[ L_z = \left\{ (v,w)\in\real^{N}\times\real\,\middle|\,\exists P: v = \int_\Xi f(\xi) P(d\xi), w = \int_\Xi h(x,\xi) P(d\xi),\, \ell\leq v\leq u, w\geq z\right\} \]
is an $(N+1)$-dimensional convex set contained in the convex hull of the points
\[\{(f_1(\xi),\dots,f_N(\xi),h(x,\xi))^\T\,|\,\xi\in\Xi\}.\]
Therefore by Carath\'eodory's theorem, as long as there exists a $(v,w)\in L_z$, there also exist $N+2$ points $\xi_1,\dots,\xi_{N+2}$ in $\Xi$ and nonnegative weights $w_1,\dots,w_{N+2}$ satisfying
\[ v = \sum_{k=1}^{N+2} w_k f(\xi_k) \; \text{ and }\; w = \sum_{k=1}^{N+2} w_k h(x,\xi_k). \qedhere\]
\end{proof}

A result of \citep{MehrotraPapp2013} is that whenever the set $\mathfrak{P}$ of distributions is defined as in \eqref{eq:defP}, a
column generation algorithm using randomly sampled columns can be used to find a distribution $P\in\mathfrak{P}$ supported on at
most $N$ points. In other words, a feasible solution to \eqref{eq:maxP} can be found using a randomized column generation
algorithm. In Section \ref{sec:randomized_column_generation} we generalize this result to show that \eqref{eq:maxP} can also be
solved to optimality within a prescribed $\epsilon>0$ accuracy using randomized column generation. The formal description of the
complete algorithm is given in Algorithm \ref{alg:maxP}.  In the remainder of this section we provide a short informal
description and the proof of correctness.

If $\Xi$ is a finite set, then the optimization problem \eqref{eq:maxP} is a linear program whose decision variables are the weights $w_i$ that the distribution $P$ assigns to each point $\xi_i\in\Xi$. In an analogous fashion, \eqref{eq:maxP} in the general case can be written as a semi-infinite linear programming problem with a weight function $w\colon\Xi\mapsto\real_0^+$ as the variable. The corresponding column generation algorithm for the solution of \eqref{eq:maxP} is then the following.

We start with a finite candidate scenario set $\{\xi_1,\dots,\xi_K\}$ that supports a feasible solution. Such points can be
obtained (for instance) using Algorithm 1 in \citep{MehrotraPapp2013}.

At each iteration we take our current candidate scenario set and solve the auxiliary linear program
\begin{equation}\label{eq:maxPaux}
\max_{w\in\real^K} \left\{ \sum_{k=1}^K w_k h(x,\xi_k)\; \middle| \; \ell \leq \sum_{k=1}^K w_k f(\xi_k) \leq u,\, w\geq 0 \right\}
\end{equation}
and its dual problem
\begin{equation}\label{eq:maxPauxdual}
\min_{(p_+,p_-)\in\real^{2N}} \left\{ p_+^\T u - p_-^\T\ell\; \middle| \; (p_+ - p_-)^\T f(\xi_k) \geq h(x,\xi_k)\; (k=1,\dots,K);\; p_+\geq 0, p_-\geq 0 \right\}.
\end{equation}
Note that by construction of the initial node set, the primal problem is always feasible, and since it is also bounded, both the primal and dual optimal solutions exist.

Let $\hat w$ and $(\hat p_+,\hat p_-)$ be the obtained primal and dual optimal solutions; the reduced cost of a point $\xi\in\Xi$ is then
\begin{equation}\label{eq:redcost}
\pi(\xi) \defeq h(x,\xi) - (\hat p_+ - \hat p_-)^\T f(\xi).
\end{equation}
As for every (finite or semi-infinite) linear program, if every $\xi\in\Xi$ has $\pi(\xi)\leq 0$, then the current primal-dual pair is optimal, that is, the discrete probability distribution corresponding to the points $\xi_k$ and weights $\hat w_k$ is an optimal solution to \eqref{eq:maxP}. Moreover, for problem \eqref{eq:maxP} we have the following, stronger, fact.

\begin{theorem}\label{thm:epsopt}
Let $\xi_1,\dots,\xi_K$, $\hat{w}$, and $\pi$ be defined as above, and let $\epsilon\geq0$ be given. If $\pi(\xi) \leq \epsilon$ for every $\xi\in\Xi$,
then the distribution defined by the support points $\xi_1,\dots,\xi_K$ and weights $\hat w_1,\dots, \hat w_K$ is an $\epsilon$-optimal feasible solution to problem \eqref{eq:maxP}.
\end{theorem}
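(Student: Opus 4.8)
The plan is to argue via weak and approximate complementary slackness for the semi-infinite linear program \eqref{eq:maxP} and its dual. First I would observe that the distribution $\hat{P}$ defined by the support points $\xi_1,\dots,\xi_K$ and weights $\hat w_1,\dots,\hat w_K$ is feasible for \eqref{eq:maxP}: this is immediate because $\hat w$ is feasible for \eqref{eq:maxPaux}, so $\ell \le \sum_k \hat w_k f(\xi_k) \le u$ and $\hat w \ge 0$, which says exactly that $\hat P \in \mathfrak{P}$. Hence the only thing to establish is the $\epsilon$-optimality of its objective value, i.e.\ that $\sum_k \hat w_k h(x,\xi_k) \ge \sup_{P\in\mathfrak{P}} \int_\Xi h(x,\xi)P(d\xi) - \epsilon$.

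Next I would construct, from the dual optimal solution $(\hat p_+,\hat p_-)$ of the finite auxiliary dual \eqref{eq:maxPauxdual}, a feasible solution of the \emph{full} semi-infinite dual of \eqref{eq:maxP}. The full dual has the constraint $(p_+-p_-)^\T f(\xi) \ge h(x,\xi)$ for \emph{every} $\xi\in\Xi$, which may be violated by $(\hat p_+,\hat p_-)$; but the hypothesis $\pi(\xi)\le\epsilon$ for all $\xi\in\Xi$ says precisely that $(\hat p_+ - \hat p_-)^\T f(\xi) \ge h(x,\xi) - \epsilon$ for all $\xi$. Here I would use the normalization (stated earlier in the section) that $f_1\equiv 1$ and $\ell_1=u_1=1$: adding $\epsilon$ to the first component of $\hat p_+$ (if $\epsilon\ge 0$; or adjusting $\hat p_-$ appropriately to stay nonnegative, e.g.\ replace $\hat p_+$ by $\hat p_+ + \epsilon e_1$) produces $(\tilde p_+,\tilde p_-)$ that is genuinely feasible for the semi-infinite dual, because the $f_1\equiv 1$ term contributes a constant $\epsilon$ to the left-hand side of every constraint. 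This shifts the dual objective $p_+^\T u - p_-^\T \ell$ by exactly $\epsilon u_1 = \epsilon$, since only the first coordinate changed and $u_1 = 1$.

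Then weak duality for the semi-infinite pair gives
\[
\sup_{P\in\mathfrak{P}} \int_\Xi h(x,\xi)P(d\xi) \;\le\; \tilde p_+^\T u - \tilde p_-^\T \ell \;=\; \hat p_+^\T u - \hat p_-^\T \ell + \epsilon,
\]
and finite-dimensional strong duality between \eqref{eq:maxPaux} and \eqref{eq:maxPauxdual} gives $\hat p_+^\T u - \hat p_-^\T \ell = \sum_{k=1}^K \hat w_k h(x,\xi_k)$. Combining the two displays yields $\sup_{P\in\mathfrak{P}} \int_\Xi h(x,\xi)P(d\xi) \le \sum_k \hat w_k h(x,\xi_k) + \epsilon$, which together with feasibility of $\hat P$ is exactly the assertion that $\hat P$ is an $\epsilon$-optimal feasible solution to \eqref{eq:maxP}. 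The main obstacle, and the step to handle carefully, is the weak-duality inequality for the semi-infinite LP \eqref{eq:maxP}: one must check that plugging an arbitrary $P\in\mathfrak{P}$ and the dual-feasible $(\tilde p_+,\tilde p_-)$ into the bilinear form and integrating over $\Xi$ is legitimate (all integrals finite, since $\Xi$ is compact and $h(x,\cdot)$, $f$ are continuous) and that the sign bookkeeping with the two-sided bounds $\ell\le \int f\,dP\le u$ and the split multipliers $p_+,p_-\ge 0$ comes out correctly — this is the usual LP-duality computation but needs the nonnegativity of $p_\pm$ and of the measure $P$ in the right places.
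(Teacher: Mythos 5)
Your proposal is correct and is essentially the paper's argument: feasibility from \eqref{eq:maxPaux}, finite strong duality $\hat p_+^\T u-\hat p_-^\T\ell=\sum_k\hat w_k h(x,\xi_k)$, and the hypothesis $\pi(\xi)\le\epsilon$ integrated against a worst-case $P\in\mathfrak{P}$ together with $\ell\le\int f\,dP\le u$ and $p_\pm\ge0$. The only cosmetic difference is that you absorb the $\epsilon$ by shifting the dual coordinate attached to $f_1\equiv1$ (using $u_1=1$) to get an exactly feasible semi-infinite dual point, whereas the paper carries the $\epsilon$ directly through the inequality chain using $\int_\Xi P(d\xi)=1$; these are the same normalization used in the same way.
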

\begin{proof}
The feasibility of the defined distribution follows from the definition of the auxiliary linear program \eqref{eq:maxPaux}, only the $\epsilon$-optimality needs proof.

If the inequality $\pi(\xi) \leq \epsilon$ holds for every $\xi\in\Xi$, then by integration we also have
\begin{equation}\label{eq:epsilon-lemma-eq1} \int_\Xi (\hat p_+ - \hat p_-)^\T f(\xi) P(d\xi) \geq \int_\Xi (h(x,\xi) - \epsilon) P(d\xi) = \int_\Xi h(x,\xi) P(d\xi) - \epsilon
\end{equation}
for every probability distribution $P$. In particular, consider an optimal solution $P^*$ to \eqref{eq:maxP} with $m^* \defeq \int_{\xi\in\Xi} f(\xi)P^*(d\xi)$.
Naturally, $\ell \leq m^* \leq u$, and so we have
\begin{align*}
\sum_{k=1}^K \hat w_k h(x,\xi_k) &= p_+^\T u - p_-^\T\ell \geq (p_+-p_-)^\T m^* =\\
&= \int_\Xi (p_+-p_-)^\T f(\xi) P^*(d\xi)
\geq \int_\Xi h(x,\xi)P^*(d\xi) - \epsilon,
\end{align*}
using strong duality for the primal-dual pair \eqref{eq:maxPaux}-\eqref{eq:maxPauxdual} in the first step, $\ell \leq m^* \leq u$ and the sign constraints on the dual variables in the second step, and inequality \eqref{eq:epsilon-lemma-eq1} in the last step. The inequality between the left- and right-hand sides of the above chain of inequalities is our claim.
\end{proof}

\subsection{Column generation using polynomial optimization}
If we can find a $\xi$ with positive reduced cost, we can add it as $\xi_{K+1}$ to the candidate support set, and recurse. Unfortunately, finding the point $\xi$ with the highest reduced cost, or even deciding whether there exists a $\xi\in\Xi$ with positive reduced cost is NP-hard, even in the case when $\Xi = [0,1]^d$, $h$ is constant zero, and the $f_i$ are the monomials of degree at most two; this follows from the NP-hardness of quadratic optimization over the unit cube.

The only non-trivial special case that is polynomial time solvable is the one where $\pi$ is a polynomial of degree two, and $\Xi$ is an ellipsoid. Then finding $\max_{\xi\in\Xi}\pi(\xi)$ is equivalent to the trust region subproblem of non-linear programming. In other cases, sum-of-squares approximations to polynomial optimization, which lead to tractable semidefinite programming relaxations \citep{Parrilo2003}, could in principle be employed. \cite[Secs.~4--5]{MehrotraPapp2013} summarizes the experience with two existing implementations, GloptiPoly \citep{gloptipoly} and SparsePOP \citep{sparsepop}, in the context of moment matching scenario generation, where a column generation approach similar to the one proposed in this paper leads to pricing problems that are special cases of the ones obtained while solving \eqref{eq:DRO}. In those problems, the largest problems that could be solved using the semidefinite programming approach were three-dimensional problems involving moments up to order $5$.

In order to find a point $\xi$ where $\pi(\xi)>0$ (or prove that such points do not exist) in polynomial time, the global maximum
of $\pi$ need not be found; it would be sufficient to have a polynomial time approximation algorithm with a positive
approximation ratio. However, the only applicable positive result known in this direction is that when $\Xi$ is a simplex, there
exists a polynomial time approximation scheme (PTAS) for every fixed degree \citep{dKLP2006}. Additionally, in low dimensions,
the approximation scheme from \citep{dLHKW2008}, which is fully polynomial time in fixed dimensions, might be useful.

When $\Xi$ is the unit cube and $\pi$ is a multilinear polynomial of degree 2, there is no applicable approximation algorithm unless $NP=ZPP$. When $\Xi$ is the unit sphere and the $\pi$ is a multilinear polynomial of degree 3, there is no applicable approximation algorithm unless $P=NP$. For simple proofs of these results, see the survey \citep{dK2008}; for the best known approximation algorithms for a large number of additional cases we refer to the recent PhD thesis \citep{lithesis2011}.

In conclusion, the available tools for polynomial optimization do not appear to be useful in solving our column generation subproblems. In the next subsection we propose an alternative, practical approach that is also applicable in the non-polynomial setting.

\subsection{Randomized column generation}\label{sec:randomized_column_generation}
Now we show that a column with negative reduced cost can be found with high probability using random sampling. This result does not require the basis functions $f_i$ or the objective $h$ to be polynomials. The randomized column generation method, \mbox{Algorithm \ref{alg:maxP}}, uses the method in \citep{MehrotraPapp2013} in its phase one to generate an initial (feasible, but not necessarily optimal) scenario set and probabilities.

The key observation is that if the functions $h(x,\cdot)$ and $f_i$ are continuously differentiable over the bounded $\Xi$, then the reduced cost function \eqref{eq:redcost} (as a function of $\xi$) also has bounded derivatives. Therefore, sufficiently many independent uniform random samples $\xi_j\in\Xi$ that result in $\pi(\xi_j)\leq 0$ will help us conclude that $\pi(\xi)\leq\epsilon$ for every $\xi\in\Xi$ with high probability. In the following theorem $B(c,r)$ denotes the (Euclidean, $d$-dimensional) ball centered at $c$ with radius $r$.

\begin{theorem}\label{thm:boundp}
Suppose the functions $h(x,\cdot)$ and $f_i$ are continuously differentiable over the closed and bounded $\Xi$, and let $C$ be an upper bound on the gradient of the reduced cost function: $\max_{\xi\in\Xi}\|\nabla \pi(\xi)\| \leq C$. Furthermore, assume that a particular $\tilde\xi\in\Xi$ satisfies $\pi(\tilde\xi) >\epsilon$. Then a uniformly randomly chosen $\xi\in\Xi$ satisfies $\pi(\xi) \leq 0$ with probability \emph{at most} $1-p$, where
\[p = \min_{\xi\in\Xi}\operatorname{vol}(\Xi\cap B(\xi,\epsilon/C))/\operatorname{vol}(\Xi) > 0.\]
In particular, if $\Xi\subseteq\real^d$ is a convex set satisfying $B(c_1,r) \subseteq \Xi \subseteq B(c_2,R)$ with some centers $c_1$ and $c_2$ and radii $r$ and $R$ we have
\[ p > (2\pi(d+2))^{-1/2} \left(\frac{r\epsilon}{2RC}\right)^d. \]
\end{theorem}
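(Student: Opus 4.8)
The plan is to turn the gradient bound on the reduced cost $\pi$ into a whole ball of points on which $\pi$ is positive, convert that into the probability estimate essentially by definition of $p$, and finally produce the explicit bound by estimating the volume of a convex region contained in $\Xi\cap B(\xi,\epsilon/C)$. \emph{Localization.} Since $\pi(\xi)=h(x,\xi)-(\hat p_+-\hat p_-)^\T f(\xi)$ inherits continuous differentiability from $h(x,\cdot)$ and the $f_i$, and $\|\nabla\pi\|\le C$ on $\Xi$, the function $\pi$ is $C$-Lipschitz along any segment contained in $\Xi$; for convex $\Xi$ (the case that matters in the application, and the one assumed in the second half of the statement) this gives $|\pi(\xi)-\pi(\xi')|\le C\|\xi-\xi'\|$ for all $\xi,\xi'\in\Xi$. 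Hence every $\xi\in\Xi$ with $\|\xi-\tilde\xi\|<\epsilon/C$ satisfies $\pi(\xi)\ge\pi(\tilde\xi)-C\|\xi-\tilde\xi\|>\epsilon-\epsilon=0$, so $\Xi\cap B(\tilde\xi,\epsilon/C)\subseteq\{\xi\in\Xi:\pi(\xi)>0\}$.

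\emph{The probability bound.} A uniformly random $\xi\in\Xi$ can have $\pi(\xi)\le 0$ only if it avoids $B(\tilde\xi,\epsilon/C)$, i.e.\ only if $\xi\in\Xi\setminus B(\tilde\xi,\epsilon/C)$; the probability of this event is $1-\operatorname{vol}(\Xi\cap B(\tilde\xi,\epsilon/C))/\operatorname{vol}(\Xi)\le 1-p$, since $p$ is the minimum of $\operatorname{vol}(\Xi\cap B(\xi,\epsilon/C))/\operatorname{vol}(\Xi)$ over all $\xi\in\Xi$, in particular over $\tilde\xi$. (Here $\{\pi\le 0\}$ is closed hence measurable, and $\operatorname{vol}(\Xi)>0$, which holds in every situation of interest.) That $p>0$ follows because $\Xi$ is compact while, for a convex body $\Xi$, the map $\xi\mapsto\operatorname{vol}(\Xi\cap B(\xi,\epsilon/C))$ is continuous and strictly positive, so its minimum over $\Xi$ is attained and positive.

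\emph{The explicit estimate.} Fix $\xi\in\Xi$ and write $\rho=\epsilon/C$. By convexity, $(1-\lambda)\{\xi\}+\lambda B(c_1,r)=B(\xi+\lambda(c_1-\xi),\lambda r)\subseteq\Xi$ for all $\lambda\in[0,1]$, so $\Xi$ contains the ``ice-cream cone'' $\operatorname{conv}(\{\xi\}\cup B(c_1,r))$, which near its apex $\xi$ coincides with a circular cone of half-angle $\theta$, where $\sin\theta=r/\|\xi-c_1\|\ge r/(2R)$; indeed the point $c_1+r(c_1-\xi)/\|c_1-\xi\|$ lies in $B(c_1,r)\subseteq\Xi$, so $\|\xi-c_1\|+r\le\operatorname{diam}\Xi\le 2R$ (the degenerate case $\|\xi-c_1\|\le r$ is easier). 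Intersecting this cone with $B(\xi,\rho)$ gives a spherical sector of radius $\rho$ and half-angle $\theta$ contained in $\Xi\cap B(\xi,\rho)$, of volume $\tfrac{\rho^d}{d}\,|S^{d-2}|\int_0^\theta\sin^{d-2}\phi\,d\phi$, which I would bound below by $\tfrac{\rho^d}{d}\,|S^{d-2}|\,\sin^{d-1}\theta/(d-1)\ge \tfrac{\rho^d}{d}\,|S^{d-2}|\,(r/2R)^{d-1}/(d-1)$. Dividing by $\operatorname{vol}(\Xi)\le\operatorname{vol}(B(c_2,R))=\tfrac{R^d}{d}|S^{d-1}|$, the surface-area ratio $|S^{d-2}|/|S^{d-1}|=\Gamma(d/2)/(\sqrt\pi\,\Gamma(\tfrac{d-1}{2}))$ is bounded below via Gautschi's inequality for the Gamma function, and after collecting the dimensional constants this is exactly where a factor of the form $(2\pi(d+2))^{-1/2}$ appears, yielding $p>(2\pi(d+2))^{-1/2}(r\epsilon/2RC)^d$.

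\emph{Main obstacle.} The only genuinely technical point is the geometric volume computation in the last paragraph: one must exhibit a region contained in $\Xi\cap B(\xi,\epsilon/C)$ whose volume is of the right order $(r\epsilon/(CR))^d$ (the truncated ice-cream cone), deal with the sub-case in which $\epsilon/C$ exceeds the distance from $\xi$ to the cone's tangent points — where one falls back on a single homothetic copy $B(\xi+\lambda(c_1-\xi),\lambda r)$ of the inscribed ball with $\lambda=\min\{1,\rho/(2R)\}$ — and then carefully track the constants through the Gamma-function bounds so as to land on the stated $(2\pi(d+2))^{-1/2}$. None of this is deep; the Lipschitz step and the probability bound are immediate once the convexity of $\Xi$ is invoked.
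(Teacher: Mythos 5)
Your localization step, the probability bound, and the positivity of $p$ are exactly the paper's argument (the paper is even slightly terser about the Lipschitz point), so the first two thirds of your proposal are fine. The difference is in the explicit volume estimate, and that is where your route has a genuine gap that the paper's construction is specifically designed to avoid. The paper does not intersect the tangent cone with $B(\xi,\epsilon/C)$: it takes the \emph{solid} cone with apex $\xi$ over the equatorial $(d-1)$-disc of $B(c_1,r)$, notes that every point of this cone lies within distance $2R$ of $\xi$ (simply because the cone sits inside $\Xi\subseteq B(c_2,R)$, which has diameter $2R$), and then shrinks the whole cone homothetically about $\xi$ by the ratio $\epsilon/(2RC)$. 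Containment in $\Xi\cap B(\xi,\epsilon/C)$ is then automatic, with no case distinction. Your spherical-sector region, by contrast, is only guaranteed to lie in $\Xi$ when $\epsilon/C$ does not exceed the tangent length $\sqrt{\|\xi-c_1\|^2-r^2}$, and this sub-case is not a removable technicality: it occurs whenever $\xi$ is near (or inside) $B(c_1,r)$. Your proposed fallback, the single homothet $B(\xi+\lambda(c_1-\xi),\lambda r)$ with $\lambda=\min\{1,\rho/(2R)\}$, yields a volume ratio of order $\bigl(r\rho/(2R^2)\bigr)^d$ with no dimensional constant attached, which is not the form you claim to recover; so the assertion that the sub-case "falls back" onto the stated bound is unsubstantiated as written.

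A second, related gap is that the constant bookkeeping is only asserted, and it is precisely the step that cannot be waved through. If you actually collect the constants in your main case, the sector bound comes out proportional to $\frac{2R}{r}\cdot\frac{|S^{d-2}|}{(d-1)|S^{d-1}|}\bigl(\frac{r\epsilon}{2R^2C}\bigr)^d$, i.e.\ with $2R^2C$ rather than $2RC$ in the denominator; no amount of Gautschi-type estimation turns this into the printed $(2\pi(d+2))^{-1/2}\bigl(\frac{r\epsilon}{2RC}\bigr)^d$, because the two expressions differ by a factor $R^d$ and the printed form is not invariant under rescaling lengths. In fairness, the paper's own displayed computation has the same slip (the factor $r^d/R^d$ coming from $V_{d-1}(r)r/V_d(R)$ is silently absorbed, so what the paper's cone argument really proves is the bound with $2R^2C$), and measured against that corrected, scale-invariant form your sector estimate and even your fallback ball would suffice, with constants to spare. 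But as a proof of the theorem as stated, your write-up stops exactly where the difficulty sits: the sub-case analysis and the final constant collection are claimed to be routine, and they are not.
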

\begin{proof}
If $\pi(\tilde\xi) > \epsilon$, then $\pi(\xi) > 0$ for every $\xi$ in its neighborhood $\Xi\cap B(\tilde\xi,\epsilon/C)$. Therefore, the assertion holds with $p(\epsilon,C)=\min_{\xi\in\Xi}\operatorname{vol}(\Xi\cap B(\xi,\epsilon/C))/\operatorname{vol}(\Xi)$. This minimum exists, because $\Xi$ is closed and bounded; and it is positive, because the intersection is a non-empty closed convex set for every center $\xi$.

To obtain the lower bound on $p$, we need to bound from below the volume of the intersection $\Xi\cap B(\xi,\epsilon/C)$. Consider the right circular cone with apex $\xi$ whose base is the $(d-1)$-dimensional intersection of $B(c_1,r)$ and the hyperplane orthogonal to the line connecting $c_1$ and $\xi$. This cone is contained within $\Xi$, and all of its points are at distance $2R$ or less from $\xi$. Shrinking this cone with respect to the center $\xi$ with ratio $\epsilon/(2RC)$ yields a cone contained in $\Xi\cap B(\xi,\epsilon/C)$. Using the volume of this cone as a lower bound on $\operatorname{vol}(\Xi\cap B(\xi,\epsilon/C))$ and the notation $V_d(r)$ for the volume of the $d$-dimensional ball of radius $r$, we get
\begin{align*}\frac{\operatorname{vol}(\Xi\cap B(\xi,\epsilon/C))}{\operatorname{vol}(\Xi)} &\geq \frac{(d+1)^{-1}V_{d-1}(r)r}{V_d(R)}\left(\frac{\epsilon}{2RC}\right)^d =
\frac{\pi^{(d-1)/2}\Gamma((d+2)/2)}{(d+1)\pi^{d/2}\Gamma((d+1)/2)}\left(\frac{\epsilon r}{2RC}\right)^d\\
&= \pi^{-1/2}\frac{\Gamma((d+2)/2)}{2\Gamma((d+3)/2)}\left(\frac{\epsilon r}{2RC}\right)^d > \pi^{-1/2}\cdot(2d+4)^{-1/2}\left(\frac{\epsilon r}{2RC}\right)^d,
\end{align*}
with some lengthy (but straightforward) arithmetic in the last inequality, using the log-convexity of the gamma function.
\end{proof}

Theorem \ref{thm:boundp}, along with Theorem \ref{thm:epsopt}, allows us to bound the number of uniform random samples $\xi\in\Xi$ we need to draw to be able to conclude with a fixed low error probability, that the optimal solution of \eqref{eq:maxPaux} is an $\epsilon$-optimal solution to \eqref{eq:maxP}. This is an explicit, although very conservative, bound: with $\hat p$ given in each iteration, and known global bounds on the gradients of $h$ and the components of $f$, an upper bound $C$ on $\|\nabla \pi(\cdot)\|$ can be easily computed in every iteration. (A global bound, valid in every iteration, can also be obtained whenever the dual variables $\hat p$ can be bounded a priori.) This provides the (probabilistic) stopping criterion for the column generation for Algorithm \ref{alg:maxP}. Note that the $\epsilon$ used in Theorems \ref{thm:epsopt} and \ref{thm:boundp} is the
same $\epsilon$ used in the termination criteria for solving \eqref{eq:DRO-SICP} using Algorithm \ref{alg:maxP}.

In order to use Theorem \ref{thm:boundp}, we need an efficient algorithm to sample uniformly from the set $\Xi$. This is obvious if $\Xi$ has a very simple geometry, for instance, when $\Xi$ is a $d$-dimensional rectangular box, simplex, or ellipsoid. Uniform random samples can also be generated efficiently from general polyhedral sets given by their facet-defining inequalities and also from convex sets, using random walks with polynomial mixing times. See, for example, the survey \citep{Vempala2005} for uniform sampling methods in polyhedra. A strongly polynomial method for polyhedra was found more recently in \citep{KannanNarayanan2012}; a weakly polynomial method for convex sets appears in \citep{LovaszVempala2006}. \citep{HuangMehrotra2013} also gives a detailed and up-to-date list of references on uniform sampling on convex sets.

We can now conclude that the semi-infinite convex program formulation of \eqref{eq:DRO} can be solved using Algorithm \ref{alg:CONCUSSION}, with Algorithm \ref{alg:maxP} and an efficient uniform sampling method serving as a probabilistic version of the oracle required by Assumption \ref{ass:2}.

\begin{figure}[ht]
\begin{mdframed}[innerleftmargin=2em]
\begin{algorithm}[Randomized column generation method to solve \eqref{eq:maxP}-\eqref{eq:defP}]\label{alg:maxP}
\hfill\par
\begin{enumerate}
\item[] Parameters: $M$, the maximum number of random samples per iteration. (See the text for details on choosing this parameter.)
\item[Step 1.] Find a finitely supported feasible distribution to \eqref{eq:maxP} using Algorithm 1 in \citep{MehrotraPapp2013}. Let $S=\{\xi_1,\dots,x_K\}$ be its support.
\item[Step 2.] Solve the primal-dual pair \eqref{eq:maxPaux}-\eqref{eq:maxPauxdual} for the optimal $\hat w$, $\hat p_+$, and $\hat p_-$.
\item[Step 3.] Sample uniform random points $\xi\in\Xi$ until one with positive reduced cost $h(x,\xi) - (\hat p_+ - \hat p_-)^\T f(\xi)$ is found or the maximum number of samples $M$ is reached.
\item[Step 4.] If in the previous step a $\xi$ with positive reduced cost was found, add it to $S$, increase $K$, and return to Step 2. Otherwise stop.
\end{enumerate}
\end{algorithm}
\end{mdframed}
\end{figure}

\section{Numerical results}\label{sec:numerical}

\subsection{Semi-infinite convex optimization problems}
Most standard benchmark problems in the semi-infinite programming literature are linear. When the problem \eqref{eq:SICP} is linear, Algorithm \ref{alg:CONCUSSION} reduces to the central cutting plane algorithm (except for our more general centering); therefore we only consider convex non-linear test problems from the literature. The results in this section are based on an implementation of the central cutting plane and central cutting surface algorithms using the AMPL modeling language and the MOSEK and CPLEX convex optimization software. The comparison between the algorithms is based solely on the number of iterations. The running times for all the examples were comparable in all instances, and were less than 5 seconds on a standard desktop computer, except for the $20$- and $40$-dimensional instances of Example \ref{ex:4e}, where the central cutting plane method needed considerably more time to converge than Algorithm \ref{alg:CONCUSSION}.

We start by an illustrative example comparing the central cutting plane algorithm of \citet{KortanekNo1993} and our central cutting surface algorithm.

\begin{example}[\citealt{TichatschkeNebeling1988}]
\begin{equation}
\begin{split}
\textrm{minimize}   &\quad (x_1-2)^2 + (x_2-0.2)^2\\
\textrm{subject to} &\quad (5\sin(\pi\sqrt{t})/(1+t^2))x_1^2 - x_2 \leq 0 \quad\forall t\in[0,1]\\
                    &\quad x_1 \in [-1,1], x_2 \in [0,0.2].
\end{split}\label{eq:Ex1}
\end{equation}\label{ex:1}
\end{example}
The example is originally from \citep{TichatschkeNebeling1988}, and it is used frequently in the literature since. (In the original paper the problem appears with $t\in[0,8]$ in place of $t\in[0,1]$ in the infinite constraint set. We suspect that this is a typographic error: not only is that a less natural choice, but it also renders the problem non-convex.)

The optimal solution is $x=(0.20523677,0.2)$. This problem is particularly simple, as only one cut is active at the optimal solution (it corresponds to $\hat{t}\approx0.2134$), and this is also the most violated inequality for every $x$.

We initialized both algorithms with the trivial upper bound $5$ on the minimum, corresponding to the feasible solution $(0,0)$. Tbl.~\ref{tbl:ex1} shows the progress of the two algorithms (using constant centering parameter $s^{(k)}=1$ in both algorithms), demonstrating that both algorithms have an empirical linear rate of convergence. The central cutting plane method generates more cuts (including multiple feasibility cuts at the point $\hat{t}$). On the other hand, the cutting surface algorithm generates only a single cut at $\hat{t}$ in the first iteration, and then proceeds by iterating through central feasible solutions until optimality is established.

\begin{table}
\centering
\begin{tabular}{ccccccc}
\toprule
         & \multicolumn{3}{c}{cutting surface} & \multicolumn{3}{c}{cutting plane}\\
\midrule
\multirow{2}{*}{$\sigma$} & feasibility & optimality & relative & feasibility & optimality & relative\\
 & cuts & cuts & error & cuts & cuts & error\\
\midrule
$10^{-4}$& 1 & 23 & $10^{-4.283}$ & 7 & 24 & $10^{-4.856}$ \\
$10^{-5}$& 1 & 29 & $10^{-5.413}$ & 7 & 29 & $10^{-5.083}$ \\
$10^{-6}$& 1 & 34 & $10^{-6.356}$ & 7 & 37 & $10^{-6.157}$ \\
$10^{-7}$& 1 & 39 & $10^{-7.304}$ & 8 & 43 & $10^{-7.174}$ \\
\bottomrule
\end{tabular}
\caption{Comparison of the central cutting surface and central cutting plane algorithms in Example \ref{ex:1}, with centering parameters $s^{(k)}=1$. $\sigma$ for the cutting plane algorithm is an identical measure of the distance from the optimal solutions as in Algorithm \ref{alg:CONCUSSION}; both algorithms were terminated upon reaching $\sigma < 10^{-7}$. The relative error columns show the relative error from the true optimal objective function value. Both algorithms clearly exhibit linear convergence, but the cutting surface algorithm needs only a single cut and fewer iterations.}\label{tbl:ex1}
\end{table}


\begin{example}[Smallest enclosing sphere]\label{ex:2}
The classic smallest enclosing ball and the smallest enclosing ellipsoid
problems ask for the sphere or ellipsoid of minimum volume that contains a finite set of given points. Both of them admit well-known second order cone programming and semidefinite programming formulations. A natural generalization is the following: given a closed parametric surface $p(t)$, $t\in T$ (with some given $T\subseteq\real^n)$, find the sphere or ellipsoid of minimum volume that contains all points of the surface. These problems also have a semi-infinite convex programming formulation. The smallest enclosing sphere, centered at $x$ with radius $r$, is given by the optimal solution of
\[
\text{minimize}\; r\quad \text{subject to}\; \|x-p(t)\| \leq r,\;\; \forall\,t \in T,
\]
whereas the smallest enclosing ellipsoid is determined by
\[
\text{maximize}\; (\det A)^{(1/n)} \quad \text{subject to}\; A \succcurlyeq 0 \;\text{ and }\; \|x-Ap(t)\| \leq 1,\;\;\forall\,t \in T.
\]
In the latter formulation $A\succcurlyeq 0$ denotes that the matrix $A$ is positive semidefinite. The objective function $\log(\det(A))$ could also be used in place of $\det(A)^{1/n}$; the two formulations are equivalent.
\end{example}

It was shown in \citep{PappAlizadeh2011} that these problems also admit a semidefinite programming (SDP) formulation whenever every component of $p$ is a polynomial or a trigonometric polynomial of a single variable. This yields a polynomial time solution, but the formulation might suffer from ill-conditioning whenever the degrees of the polynomials (or trigonometric polynomials) involved is too large. Additionally, the sum-of-squares representations of nonnegative (trigonometric) polynomials that the SDP formulation hinges on do not generalize to multivariate polynomials. The central surface cutting algorithm does not have comparable running time guarantees to those of semidefinite programming algorithms, but it is applicable in a more general setting (including multi-dimensional index sets $T$ corresponding to multivariate polynomials), and does not suffer from ill-conditioning.

We give two examples of different complexity. First, consider the two-dimensional parametric curve
\begin{equation}
p(t) = (c \cos(t)-\cos(ct), c \sin(t)-\sin(ct)),\quad c=4.5,\, t\in[0,4\pi].
\label{eq:seb_1}
\end{equation} This symmetric curve has a smallest enclosing circle centered at the origin, touching the curve at 7 points. (Fig.~\ref{seb_1_a}.)

\begin{figure}
\centering
\subfigure[]{
  \includegraphics[height=175pt]{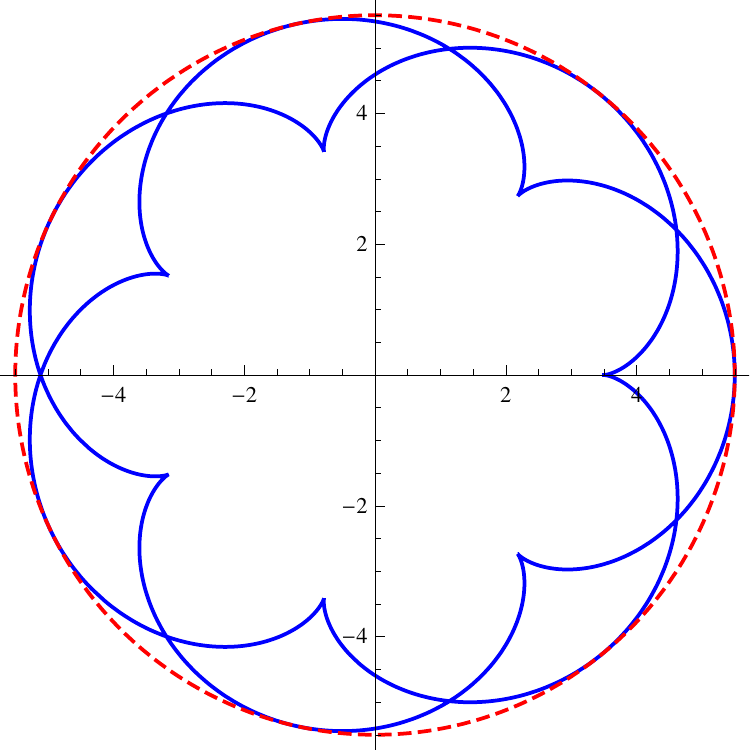}\label{seb_1_a}
}
\subfigure[]{
  \includegraphics[height=175pt]{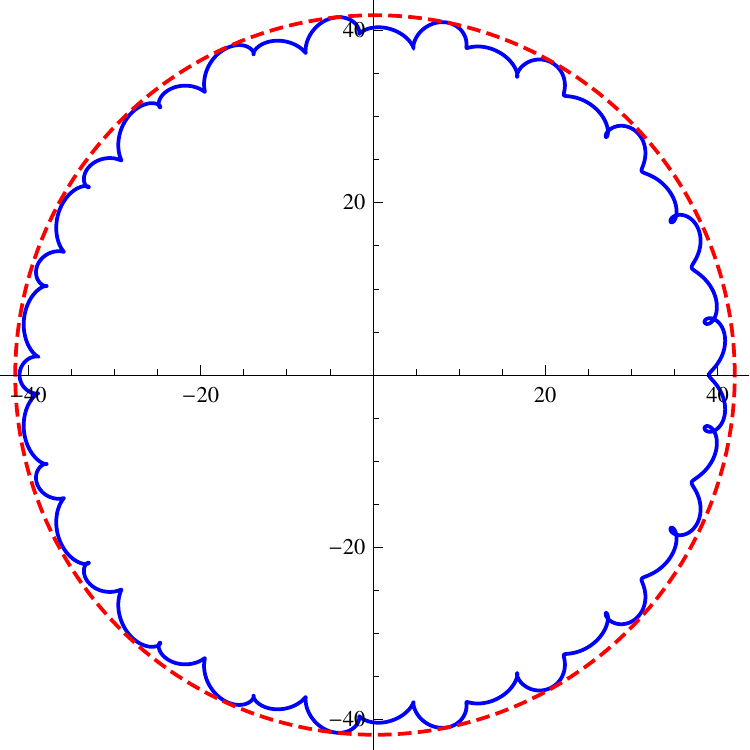}\label{seb_1_b}
}
\caption{The parametric curves \eqref{eq:seb_1} and \eqref{eq:seb_2}, and their smallest enclosing circles.}\label{fig:seb_1}
\end{figure}

Tbl.~\ref{tbl:seb_1} shows the rate of convergence of the two algorithms (using constant centering parameter $s^{(k)}=1$ in both algorithms). The initial upper bound on the minimum was set to $2(c+1)^2$, obtained by a simple term-by-term bound on the objective. In this example, the number of optimality cuts is approximately the same for the two algorithms, but there is a difference in the number of feasibility cuts, and consequently in the total number of iterations.

\begin{table}[!ht]
\centering
\begin{tabular}{ccccccc}
\toprule
         & \multicolumn{3}{c}{cutting surface} & \multicolumn{3}{c}{cutting plane}\\
\midrule
\multirow{2}{*}{$\sigma$} & feasibility & optimality & relative & feasibility & optimality & relative\\
 & cuts & cuts & error & cuts & cuts & error\\
\midrule
$10^{-4}$& 6 & 16 & $10^{-5.267}$ & 12 & 16 & $10^{-5.705}$ \\
$10^{-5}$& 6 & 20 & $10^{-6.845}$ & 13 & 18 & $<10^{-10}$ \\
$10^{-6}$& 6 & 23 & $<10^{-10}$ & 14 & 22 & $<10^{-10}$ \\
$10^{-7}$& 6 & 26 & $<10^{-10}$ & 14 & 27 & $<10^{-10}$ \\
$10^{-8}$& 6 & 28 & $<10^{-10}$ & 14 & 28 & $<10^{-10}$ \\
\bottomrule
\end{tabular}
\caption{Comparison of the central cutting surface and central cutting plane algorithms on the first curve of Example \ref{ex:2}, with centering parameters $s^{(k)}=1$. $\sigma$ for the cutting plane algorithm is an identical measure of the distance from the optimal solutions as in Algorithm \ref{alg:CONCUSSION}; both algorithms were terminated upon reaching $\sigma < 10^{-8}$.}\label{tbl:seb_1}
\end{table}

Now consider an asymmetric, high-degree variant of the previous problem, depicted on Fig.~\ref{seb_1_b}:
\begin{equation}
p(t) = (c \cos(t)-\cos(ct), \sin(20t) + c \sin(t)-\sin(ct)),\quad c=40,\, t\in[0,2\pi].
\label{eq:seb_2}
\end{equation}
The center is no longer at the origin, and a closed form description of the circle is difficult to obtain. The semidefinite programming based solution of \citep{PappAlizadeh2011} is theoretically possible, but practically not viable, owing to the high degree of the trigonometric polynomials involved. Tbl.~\ref{tbl:seb_2} shows the rate of convergence of the two algorithms (using constant centering parameter $s^{(k)}=1$ in the cutting surface algorithm).

\begin{table}[!ht]
\centering
\begin{tabular}{ccccccc}
\toprule
         & \multicolumn{3}{c}{cutting surface} & \multicolumn{3}{c}{cutting plane}\\
\midrule
\multirow{2}{*}{$\sigma$} & feasibility & optimality & relative & feasibility & optimality & relative\\
 & cuts & cuts & error & cuts & cuts & error\\
\midrule
$10^{-4}$& 6 & 23 & $10^{-7.517}$ & 15 & 21 & $10^{-5.321}$ \\
$10^{-5}$& 6 & 26 & $10^{-8.463}$ & 15 & 24 & $10^{-8.463}$ \\
$10^{-6}$& 6 & 29 & $<10^{-10}$   & 17 & 27 & $<10^{-10}$ \\
$10^{-7}$& 6 & 32 & $<10^{-10}$   & 17 & 30 & $<10^{-10}$ \\
$10^{-8}$& 7 & 35 & $<10^{-10}$   & 17 & 34 & $<10^{-10}$ \\
\bottomrule
\end{tabular}
\caption{Comparison of the central cutting surface and central cutting plane algorithms on the second curve of Example \ref{ex:2}, with centering parameters $s^{(k)}=1$. $\sigma$ for the cutting plane algorithm is an identical measure of the distance from the optimal solutions as in Algorithm \ref{alg:CONCUSSION}; both algorithms were terminated upon reaching $\sigma < 10^{-8}$. The relative error columns show the relative error from the true optimal objective function value.}\label{tbl:seb_2}
\end{table}

In our next example we consider a generalization of the above problems, a problem with second order cone constraints of dimension
higher than two, and investigate the hypothesis that cutting surfaces may be particularly advantageous in higher dimensions, when
a polyhedral approximation of the feasible set is expensive to build.

\begin{example}\label{ex:4e}
Consider the SICP
\[ \min_{x\in[-1,1]^n} \max_{t\in[0,1]} \sum_{i=1}^n (ix_i - i/n - \sin(2\pi t + i))^2.  \]
It is easy to see that the optimal solution is $x = (1/n,1/n,\dots,1/n)$.
\end{example}

The initial upper bound $U=4n$ on the minimum can be obtained by taking a term-by-term upper bound of the objective at $x=0$. We used this bound to initialize the central cutting surface and central cutting plane algorithms. As in the above examples, we used the centering parameter $s^{(k)}=1$ in both algorithms.

Tbl.~\ref{tbl:ex4e_s1} shows the number of feasibility cuts and the number of optimality cuts  necessary until the stopping
condition $\sigma < 10^{-6}$ is satisfied for different values of $n$.

\begin{table}[!ht]
\centering
\begin{tabular}{lcccccc}
\toprule
         & $n=$  &  5   & 10    & 20    & 40 \\
\midrule
cutting surface && 13+19 & 16+17 & 15+19  & 15+22  \\
cutting plane   && 93+14 & 290+15 & 1179+15 & $>$10000  \\
\bottomrule
\end{tabular}
\caption{Comparison of the central cutting surface and central cutting plane algorithms on Example \ref{ex:4e}, for different values of $n$ (the number of decision variables). Each entry in the table is in the format \emph{the number of feasibility cuts + the number of optimality cuts}, obtained with the centering parameter $s^{(k)}=1$. Both algorithms were terminated upon reaching $\sigma < 10^{-6}$ or after 10000 cuts.}\label{tbl:ex4e_s1}
\end{table}

It is clear that in this example the number of feasibility cuts (and the total number of cuts) in the cutting plane algorithm grows much more rapidly with dimension than in the cutting surface algorithm. This is consistent with the fact that, unless strong centering is applied, a good polyhedral approximation (for cutting planes) or conic approximation (for cutting surfaces) of the feasible set needs to be built, which requires considerably more planar cuts than surface cuts. In the next section we consider the effect of centering further.

\subsubsection{The effect of the centering parameter}

The fact in Examples \ref{ex:1} and \ref{ex:2} most generated cuts are optimality cuts, not feasibility cuts, suggests that our default setting of the centering parameter, $s^{(k)}=1$ in each iteration $k$, might not be optimal. At the other extreme, $s^{(k)} = 0$ is expected to yield infeasible solutions in all iterations but the last. Another natural choice for the centering parameter, as discussed in Section \ref{sec:algorithm}, is the gradient of the norm of the violated inequality, which is suggested by Kortanek and No in their central cutting plane algorithm. Finally, our convergence proof shows that one can also use a constant fraction of this gradient norm. Example \ref{ex:4e} also suggests that the centering parameter that keeps a balance between feasibility and optimality cuts might be different for the two algorithms, and that centering might be less important for cutting surfaces than for cutting planes (which must avoid building expensive polyhedral approximations of the feasible set around points that are far from the optimum). In this section we further examine (empirically) the effect of the centering parameter.

The smallest examples above solved by the cutting surface algorithm with no centering in only two iterations; for instance, in Example \ref{ex:1}, the cutting surface algorithm generates one feasibility cut (at the same point $\hat{t}$ as the cutting surface algorithm with centering), and then one optimality cut, after which the optimality is proven.

For a non-trivial example, consider the second instance of the smallest enclosing sphere problems in Example \ref{ex:2}, with the parametric curve defined in \eqref{eq:seb_2}, and solve again the corresponding SICP problem using Algorithm \ref{alg:CONCUSSION}, as well as the central cutting plane algorithm of Kortanek and No, using different constant centering parameters $s^{(k)}$. Tbls.~\ref{tbl:varyingcentrality-1} and \ref{tbl:varyingcentrality-2} show the number of feasibility and optimality cuts for different values of this parameter. (The stopping criterion was $\sigma<10^{-8}$.)

\begin{table}[h]
\centering
\begin{tabular}{lrrrrrrrrrrrr}
\toprule
 $s^{(k)}$ & $10^{-9}$ & $10^{-7}$ & $10^{-5}$ & $10^{-3}$ & $10^{-2}$ & $10^{-1}$ & $1.$ & $10^1$ & $10^2$  \\
\midrule
cutting surfaces\\
feasibility cuts & 9 & 8 & 7 & 7 & 7 &  7 &  7 &   9 &   10 \\
optimality cuts  & 2 & 2 & 3 & 4 & 6 & 11 & 35 & 190 & 1496 \\
\midrule
cutting planes\\
feasibility cuts & 18 & 18 & 16 & 16 & 16 & 17 & 17 &  23 &   27 \\
optimality cuts  &  2 &  2 &  3 &  4 &  6 & 11 & 34 & 195 & 1827 \\
\bottomrule
\end{tabular}
\caption{The effect of centering on the number of cuts in the central cutting surface and central cutting plane algorithms using a constant centering parameter.}\label{tbl:varyingcentrality-1}
\end{table}

\begin{table}[h]
\centering
\begin{tabular}{lrrrrrrrrr}
\toprule
 $s^{(k)}/\|\nabla g(x^{(k)},t^{(k)})\|$ & $10^{-9}$ & $10^{-7}$ & $10^{-5}$ & $10^{-3}$ & $10^{-2}$ & $10^{-1}$ & $1.$ \\
\midrule
cutting surfaces\\
feasibility cuts &  7  &  7 &  7 &  7 &  7 &   9 &   10 \\
optimality cuts  &  2  &  3 &  4 & 11 & 33 & 183 & 1379 \\
\midrule
cutting planes\\
feasibility cuts & 18 & 18 & 16 & 16 & 16 &  26 & {\it   22} \\
optimality cuts  &  2 &  3 &  6 & 10 & 30 & 155 & {\it 1524} \\
\bottomrule
\end{tabular}
\caption{The effect of centering on the number of cuts in the central cutting surface and central cutting plane algorithms using a constant fraction of the gradient norm as centering parameter. The italic numbers in the last column indicate the original central cutting plane algorithm as proposed in \citep{KortanekNo1993}. Even the central cutting plane algorithm benefits considerably from adjusting the centering parameter.}\label{tbl:varyingcentrality-2}
\end{table}

It is interesting to note that the original central cutting plane algorithm, as proposed in \citep{KortanekNo1993}, which uses the gradient norm as the centering parameter, performs particularly poorly in this example. (See the last column of Table \ref{tbl:varyingcentrality-2}.) Even this method benefits from adjusting (in this case, lowering) the centering parameter.

Now let us consider Example \ref{ex:4e}, and solve it again with choices for of the centering parameter. Tbl.~\ref{tbl:ex4e_s1} in the previous section shows the results for $s^{(k)}=1$. Tbl.~\ref{tbl:ex4e_s0} shows what happens with no centering, while Tbls.~\ref{tbl:ex4e_s1e-2g}--\ref{tbl:ex4e_s1g} show results with centering using different fractions of the gradient norm.

\begin{table}[!ht]
\centering
\begin{tabular}{lcccccc}
\toprule
         & $n=$  &  5   & 10    & 20    & 40 \\
\midrule
cutting surface && 14+1 & 17+1  & 22+1  & 22+1 \\
cutting plane   && 94+1 & 402+1 & 4972+1 & $>$10000 \\
\bottomrule
\end{tabular}
\caption{Results from Example \ref{ex:4e} using $s^{(k)}=0$ (no centering). Each entry in the table shows the number of feasibility cuts + the number of optimality cuts. The stopping criterion $\sigma < 10^{-6}$.}\label{tbl:ex4e_s0}
\end{table}

\begin{table}[!ht]
\centering
\begin{tabular}{lcccccc}
\toprule
         & $n=$  &  5   & 10    & 20    & 40 \\
\midrule
cutting surface && 13+8 & 15+11 & 16+20  & 11+47   \\
cutting plane   && 87+6 & 304+9 & 1139+16 & 4510+34  \\
\bottomrule
\end{tabular}
\caption{Results from Example \ref{ex:4e} using $s^{(k)}=10^{-2}\|\nabla\|$. Each entry in the table shows the number of feasibility cuts + the number of optimality cuts. The stopping criterion $\sigma < 10^{-6}$.}\label{tbl:ex4e_s1e-2g}
\end{table}

\begin{table}[!ht]
\centering
\begin{tabular}{lcccccc}
\toprule
         & $n=$  &  5   & 10    & 20    & 40 \\
\midrule
cutting surface && 15+24 & 14+48 & 13+123 & 10+369   \\
cutting plane   && 99+18 & 279+36 & 922+87 & 3483+232  \\
\bottomrule
\end{tabular}
\caption{Results from Example \ref{ex:4e} using $s^{(k)}=10^{-1}\|\nabla\|$. Each entry in the table shows the number of feasibility cuts + the number of optimality cuts. The stopping criterion $\sigma < 10^{-6}$.}\label{tbl:ex4e_s1e-1g}
\end{table}

\begin{table}[!ht]
\centering
\begin{tabular}{lcccccc}
\toprule
         & $n=$  &  5   & 10    & 20    & 40 \\
\midrule
cutting surface && 12+175 &  12+383 & 11+886 & 8+3115 \\
cutting plane   && 92+102 & 250+247 & 823+705 & 2990+1971  \\
\bottomrule
\end{tabular}
\caption{Results from Example \ref{ex:4e} using $s^{(k)}=\|\nabla\|$. Each entry in the table shows the number of feasibility cuts + the number of optimality cuts.}\label{tbl:ex4e_s1g}
\end{table}

The results exhibit some interesting phenomena. First, the cutting surface algorithm benefits less from strong centering than cutting planes, although it does benefit from some centering. It is also apparent that cutting planes require higher values for the centering parameter before the intermediate solutions become central (feasible). In the extreme case, with no centering (Table \ref{tbl:ex4e_s0}), both methods generate infeasible points throughout the algorithm, until an $\epsilon$-feasible point is found. In this case, the algorithm ends with an optimality cut in the last iteration.

The results also indicate that the central cutting plane algorithm is more sensitive to the choice of the centering parameter than the cutting surface algorithm.

Finally, it appears that in the high-dimensional instances cutting planes cannot compete with even the plain, uncentered, cutting surfaces, regardless of the type of centering used in the cutting plane method. This is explained by the fact that the high-dimensional convex feasible set cannot be approximated well by a small number of planar cuts. This is one setting where we expect the cutting surface method to be superior to cutting planes in general.

\subsection{Robust, distributionally robust, and stochastic optimization}

To illustrate the use of the central cutting surface algorithm in moment robust optimization (Section \ref{sec:DRO}), we return to Example \ref{ex:1}, and turn it into a problem with robust stochastic constraints:

\begin{example}\label{ex:3}
\begin{equation}
\begin{split}
\textrm{minimize}   &\quad (x_1-2)^2 + (x_2-0.2)^2\\
\textrm{subject to} &\quad \mathbb{E}_P[(5\sin(\pi\sqrt{\xi})/(1+\xi^2))x_1^2 - x_2] \leq 0 \quad\forall P\in\mathfrak{P}_m\\
                    &\quad x_1 \in [-1,1], x_2 \in [0,0.2],
\end{split}\label{eq:ex3}
\end{equation}
where $\mathfrak{P}_m$ is a set of probability distributions supported on $\Xi = [0,1]$ with prescribed polynomial moments up to order $m$:
\[ \mathfrak{P}_m \defeq \{P\,|\, \mathbb{E}_P [\xi^i] = 1/(i+1), i=0,\dots,m \}.\]
\end{example}

Setting $m=0$ in the above formulation gives the classic robust optimization version of the problem, which is equivalent to the original Example \ref{ex:1}.

At the other extreme, $\mathfrak{P}_\infty$ contains only the uniform distribution supported on $[0,1]$. Therefore, solving \eqref{eq:ex3} for $m=\infty$ amounts to solving a stochastic programming problem with a continuous scenario set. (Recall \mbox{Theorem \ref{thm:DRO-to-SP}}.) We solved a highly accurate deterministic approximation of this problem by replacing the continuous scenario set with a discrete one, corresponding to the 256-point Gaussian rule for numerical integration; this case, therefore, does not require the solution of a SICP.

The solutions to problem \eqref{eq:ex3} for increasing values of $m$ correspond to less and less conservative (or risk-averse) solutions. It is instructive to see how the solutions of these problems evolve as we impose more and more moment constraints, moving from the robust optimization solution to the stochastic programming solution. In particular, this simple problem illustrates the value of moment information beyond the first and second moments. Interestingly, at the same time, there is no increase in the number of cuts necessary to find the optimum.

The results are summarized in Tbl.~\ref{tbl:ex3}. Note the rather large difference between the optimal values of $x_1$ and the objective function upon the addition of the first few moment constraints.

\begin{table}[!h]
\centering
\begin{tabular}{cccccc}
\toprule
$m$ & optimality cuts & feasibility cuts & $x_1$ & $x_2$ & $z$ \\
\midrule
$0$ & 4 & 3 & $0.20527$ & $0.2$ & $3.2211$ \\
\midrule
$1$ & 5 & 3 & $0.24654$ & $0.2$ & $3.0746$ \\
$2$ & 5 & 2 & $0.24712$ & $0.2$ & $3.0726$ \\
$3$ & 5 & 2 & $0.26242$ & $0.2$ & $3.0192$ \\
$4$ & 5 & 2 & $0.26797$ & $0.2$ & $2.9999$ \\
$5$ & 5 & 2 & $0.26978$ & $0.2$ & $2.9937$ \\
$6$ & 4 & 2 & $0.27042$ & $0.2$ & $2.9914$ \\
\midrule
$\infty$ & n/a & n/a & $0.27181$ & $0.2$ & $2.9866$\\
\bottomrule
\end{tabular}
\caption{Comparison of the solutions of problem \eqref{eq:ex3} with different moment constraints. $m=0$ is conventional robust optimization, $m=\infty$ corresponds to conventional stochastic programming. Intermediate values of $m$ yield solutions at different levels of risk-aversion. The  solutions were obtained using Algorithm \ref{alg:CONCUSSION}, with constant centering $s^{(k)}=10^{-3}$, and stopping condition $\sigma<10^{-8}$, except for $m=\infty$ (see text).}\label{tbl:ex3}
\end{table}

\subsubsection{A portfolio optimization example}

We illustrate the use of Algorithms \ref{alg:CONCUSSION} and \ref{alg:maxP} for the solution of \eqref{eq:DRO} using a portfolio optimization example motivated by \citep{DelageYe2010}. In our experiments we randomly chose three assets from the 30 Dow Jones assets, and tracked for a year the performance of a dynamically allocated portfolio that was rebalanced daily. Each day the 30-day history of the assets were used to estimate the moments of the return distribution, and reallocate the portfolio according to following the optimal moment-robust distribution.

We split the results into two parts: we carried out the simulation using both 2008 and 2009 data to study the properties of the optimal portfolios under very different market conditions (hectic and generally downward in 2008, versus strongly increasing in 2009). In both cases we looked at portfolios optimized using different moment constraints (or, using the notation of Example \ref{ex:3}, we used different sets $\mathfrak{P}_m$). We tracked a portfolio optimized using only first and second moment constraints, and one where the third and fourth marginal moments were also constrained. Sample plots are shown in Fig.~\ref{fig:returns}, where the selected assets were AXP, HPQ, and IBM.

The results show the anticipated trends: the more conservative portfolio (optimized for the worst case among all return distributions compatible with the observed first and second moments) invests generally less, and avoids big losses better than the second portfolio (which is optimized for the worse case among a smaller set of distributions), at the price of missing out on a larger possible return.

\begin{figure}
\centering
\subfigure[year 2008]{
  \includegraphics[width=200pt]{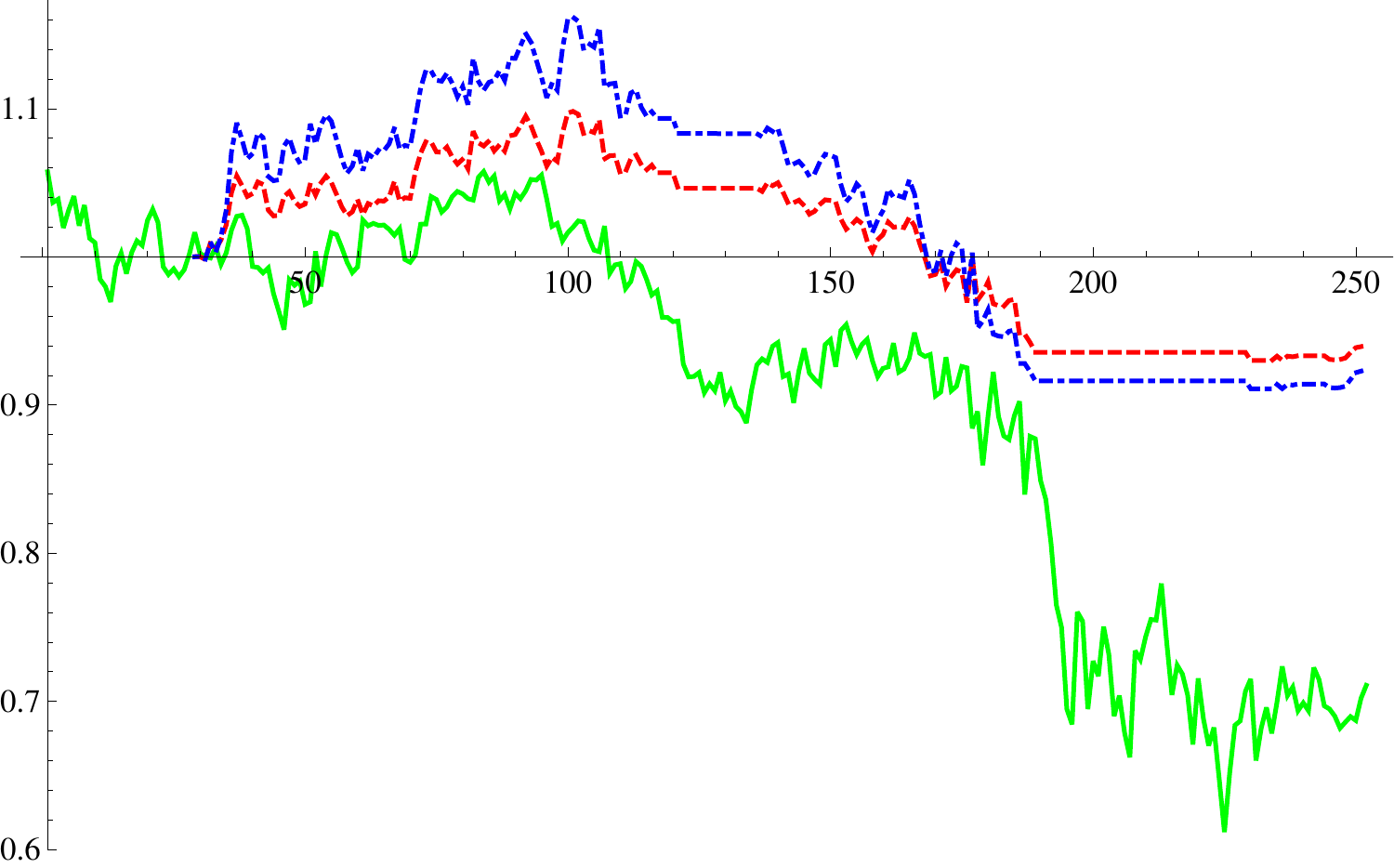}
}
\subfigure[year 2009]{
  \includegraphics[width=200pt]{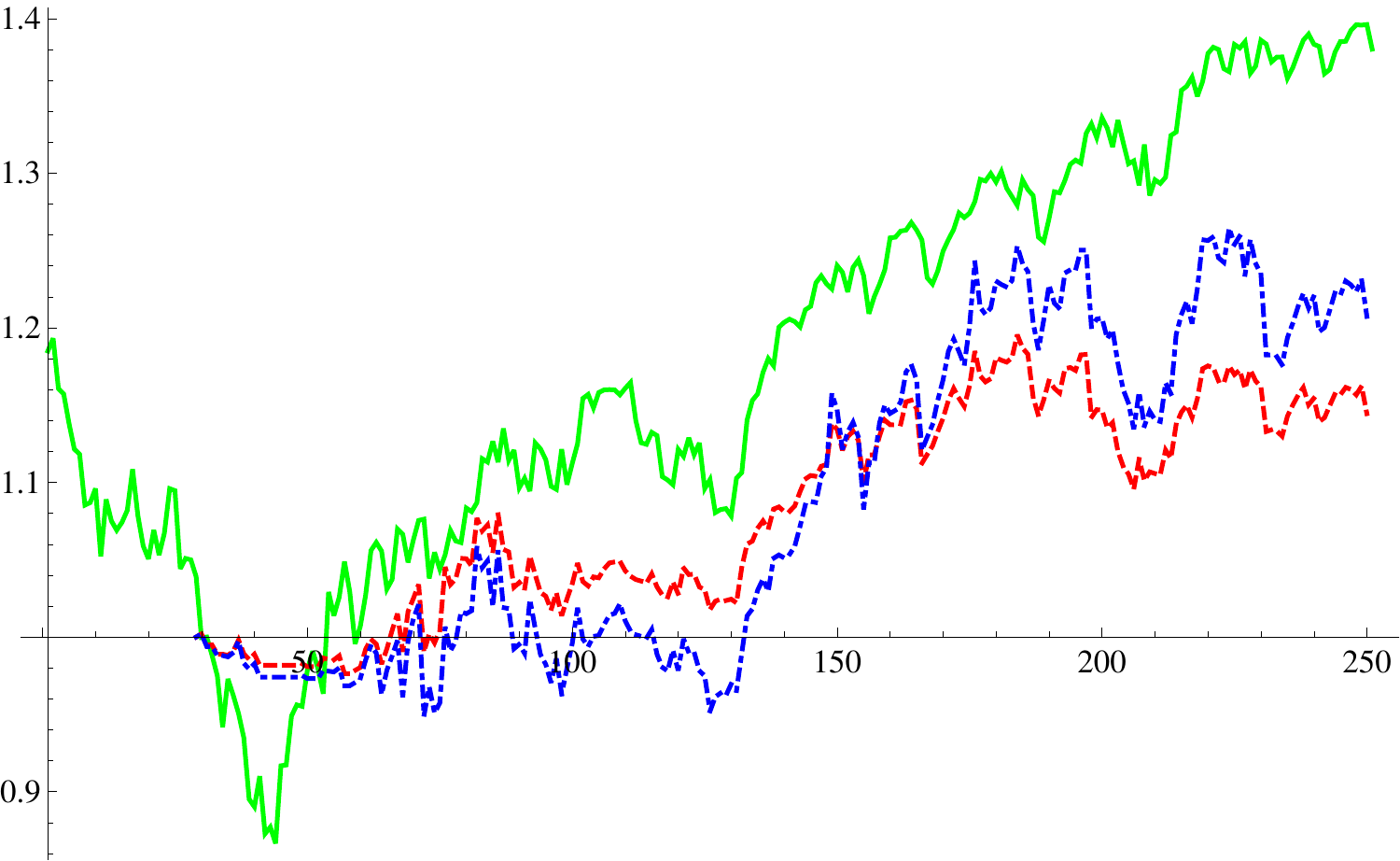}
}
\caption{The performance of two moment-robust portfolios rebalanced daily, compared to market performance. The market (solid, green) is the Dow Jones index scaled to have value $1$ at the start of the experiment (day 31). The red dashed line shows the value of a portfolio optimized using the first and second moment information of the last 30 days' return. (Hence the curve starts at day 31.) The blue dot-dashed line shows the value of a portfolio optimized using the same moments and also the third and fourth marginal moments of the last 30 days' return. As expected, the first, more conservative portfolio outperforms the second one whenever the market conditions are bad, and only then. Both robust portfolios avoid the sharp drop in 2008 by not investing.}\label{fig:returns}
\end{figure}

The algorithm was implemented in Matlab R2012a (Windows 7 64-bit), using the interior-point solver IPOPT 3.10.2 for the solution of the master problems and the linear programming solver CPLEX 12.5 for the cut generation oracle subproblems, and was run on a desktop computer with an Intel Xeon 3.06GHz CPU. Tbls.~\ref{tbl:dro_stats_2} and \ref{tbl:dro_stats_4} show the summary statistics of the algorithms performance, separately for the instances with up to second moment constraints and for the instances with moment constraints of order up to 4. The stopping criterion for the cutting surface algorithm was $\sigma < 10^{-3}$.

\begin{table}[!h]
\centering
\begin{tabular}{llllll}
\toprule
                          & min & 25\% & median & 75\% & max \\
\midrule
master problem time [sec] & 0.1708 & 0.52 & \phantom{1}0.77 & \phantom{1}1.14 & \phantom{11}2.05\\
master problem iterations & 2 & 2 &  \phantom{1}4 &  \phantom{1}5 & \phantom{1}10\\
subproblem time [sec]     & 0.0140 & 1.47 & 21.28 & 43.77 & 180.77\\
subproblem iterations     & 1 &  27  & 54 & 87.25 & 186 \\
\midrule
total wall-clock time [sec] & 9.4851 & 19.854 & 75.089 & 109.81 & 312\\
\bottomrule
\end{tabular}
\caption{Summary statistics of the moment robust optimization algorithm on the portfolio optimization example with moment constraints up to order 2. Each problem instance corresponds to one day in year 2008 or 2009; the table shows iteration count and timing results per instance.}\label{tbl:dro_stats_2}
\end{table}

\begin{table}[!h]
\centering
\begin{tabular}{llllll}
\toprule
                          & min & 25\% & median & 75\% & max \\
\midrule
master problem time [sec] & 0.2049 & 0.412 & \phantom{1}0.602 & \phantom{1}0.775 & \phantom{1}1.04\\
master problem iterations & 2 & 2 &  \phantom{1}2 &  \phantom{1}2 & \phantom{1}5\\
subproblem time [sec]     & 0.0182 & 9.551 & 15.1  & 28.1  & 88.2\\
subproblem iterations     & 1  &   3  &  43  &  87 &  986 \\
\midrule
total wall-clock time [sec] & 9.6945 & 11.192 & 17.666 & 29.738 & 136.87\\
\bottomrule
\end{tabular}
\caption{Summary statistics of the moment robust optimization algorithm on the portfolio optimization example with moment constraints up to order 4.}\label{tbl:dro_stats_4}
\end{table}

As expected, the bottleneck of the algorithm is the randomized cut generation oracle: Algorithm \ref{alg:maxP} takes considerably longer time to find a distribution whose corresponding constraint is violated than it takes to solve the master problems, which are very small convex optimization problems. Nevertheless, the cutting surface algorithm achieved very fast convergence (requiring less then 5 iterations for most instances), and therefore most problems were solvable within one minute.

\section{Conclusion}\label{sec:conclusion}

The convergence of the central cutting surface algorithm was proven under very mild assumptions, which are essential to keep the problem at hand convex, with a non-empty interior. The possibility of using non-differentiable functions in the constraints whose subgradients may not be available, as well as using an infinite dimensional constraint set, may extend the applicability of semi-infinite programming to new territories.
We found that the number of surface cuts can be considerably lower than the number of linear cuts in cutting plane algorithms, which compensates for having to solve a convex optimization problem in each iteration instead of a linear programming problem.
We also found the choice of the centering parameter to be less important for the cutting surface algorithm; both cutting surface and cutting plane algorithms benefit from the more general analysis of our paper, which allows different choices for this parameter from the gradient norm proposed by Kortanek and No.

Our main motivation was distributionally robust optimization, but we hope that other applications involving constraints on probability distributions, and other problems involving a high-dimensional index set $T$, will be forthcoming.

Distributionally robust optimization with multivariate distributions is a relatively recent area, where not even the correct algorithmic framework to handle the arising problems can yet be agreed upon. Methods proposed in the most recent literature include interior point methods for semidefinite programming and the ellipsoid method, but these are not applicable in the presence of moment constraints of order higher than two. Our algorithm is completely novel in the sense that it is the first semi-infinite programming approach to distributionally robust optimization, and it is also the most generally applicable algorithm proposed to date.

Although it can hardly be expected that the semi-infinite programming based approach will be as efficient as the polynomial time methods proposed for the special cases, further research into moment matching scenario generation and distribution optimization algorithms may improve on the efficiency of our method. Simple heuristics might also be beneficial. For example, if several cuts (corresponding to probability distributions $P_1,\dots,P_k$) have already been found and added to the master problem, then before searching for the next cut among distributions supported on the whole domain $\Xi$, we can first search among distributions supported on the union of the support of the distributions $P_1,\dots,P_k$. This is a considerably cheaper step, which requires only the solution of a (finite) linear program, whose solution can be further accelerated by warmstarting.

Since without third and fourth moment information the overall shape of a distribution cannot be determined even approximately, we expect that future successful algorithms in distributionally robust optimization will also have the ability of including higher order moment information in the definition of the uncertainty sets.

\section*{Acknowledgements} The research was partially supported by the grant NSF CMMI-1100868. This material is based upon work supported by the U.S.~ Department of Energy Office of Science, Office of Advanced Scientific Computing Research, Applied Mathematics program under Award Number DOE-SP0011568.

We are also grateful to the referees for their input on both technical details and the presentation of the material.

\appendix

\section*{Appendix: Proof of Theorem \ref{thm:DRO-to-SP}}

\begin{proof}
Let $z_i$ denote the optimal objective function value of \eqref{eq:DROi} for every $i$, and let $z_{SP}$ denote the optimal objective function value of \eqref{eq:SP}; we want to show that $\lim_{i\to\infty}z_i = z_{SP}$.

The sequence $(z_i)_{i=0,1,\dots}$ is convergent because it is monotone decreasing (since $\mathfrak{P}_0 \supseteq \mathfrak{P}_1 \supseteq \cdots \supseteq \cap_{i=0}^m\mathfrak{P}_i$) and it is bounded from below by $z_{SP}$:
\begin{equation}\label{eq:zigeqzsp}
\begin{split}
 z_i &= \min_{x\in X} \max_{Q\in\mathfrak{P}_i} \int_{\xi\in\Xi} h(x,\xi) Q(d\xi) \geq \max_{Q\in\mathfrak{P}_i} \min_{x\in X} \int_{\xi\in\Xi} h(x,\xi) Q(d\xi)\\
&\geq \min_{x\in X} \int_{\xi\in\Xi} h(x,\xi) P(d\xi) = z_{SP}.
\end{split}
\end{equation}

Consider now the stochastic programming problem \eqref{eq:SP}. Denote by $\bar{x}$ one of its optimal solutions, and let
\[\bar{z}_i \defeq \max_{Q\in\mathfrak{P}_i} \int_{\xi\in\Xi} h(\bar{x},\xi)Q(d\xi).\]
Obviously, $z_i \leq \bar{z}_i$ for every $i$. In view of \eqref{eq:zigeqzsp}, it suffices to show that $\bar{z}_i \to z_{SP}$.

For every $i$, choose an arbitrary $\bar{Q}_i \in \argmax_{Q\in\mathfrak{P}_i} \int_{\xi\in\Xi} h(\bar{x},\xi)Q(d\xi)$. Since the moments of $\bar{Q}_i$ and $P$ agree up to order $i$, we have that
\begin{equation}\label{eq:bize}
\int_{\xi\in\Xi}p(\xi) \bar{Q}_i(d\xi) = \int_{\xi\in\Xi}p(\xi) P(d\xi)
\end{equation}
for every polynomial $p$ of total degree at most $i$.


By assumption, the function $h(\bar{x},\cdot)$ is continuous on the closed and bounded set $\Xi$. Let $p_j$ denote its best uniform polynomial approximation of total degree $j$; by the Weierstrass approximation theorem we have that for every $\epsilon>0$ there exists a degree $j(\epsilon)$ such that $\max_{\xi\in\Xi} |h(\bar{x},\xi)-p_{j(\epsilon)}(\xi)| < \epsilon$, and therefore,
\begin{equation}\label{eq:ize}
\int_{\xi\in\Xi} |h(\bar{x},\xi)-p_{j(\epsilon)}(\xi)|\bar{Q}_i(d\xi) < \epsilon \text{ and } \int_{\xi\in\Xi} |h(\bar{x},\xi)-p_{j(\epsilon)}(\xi)|P(d\xi) < \epsilon.
\end{equation}
With this $j(\epsilon)$, every $i\geq j(\epsilon)$ satisfies the inequalities
\begin{equation*}
\begin{split}
&|\bar{z}_i-z_{SP}| = \left|\int_{\xi\in\Xi} h(\bar{x},\xi)\bar{Q}_i(d\xi) -\int_{\xi\in\Xi} h(\bar{x},\xi)P(d\xi)\right|\leq\\
&\leq \left|\int_{\xi\in\Xi}h(\bar{x},\xi)\bar{Q}_i(d\xi) - \int_{\xi\in\Xi}p_{j(\epsilon)}(\xi)\bar{Q}_i(d\xi)\right| +
\left|\int_{\xi\in\Xi}p_{j(\epsilon)}(\xi)\bar{Q}_i(d\xi) - \int_{\xi\in\Xi}h(\bar{x},\xi)P(d\xi)\right|=\\
&=\left|\int_{\xi\in\Xi}\left(h(\bar{x},\xi) - p_{j(\epsilon)}(\xi)\right)\bar{Q}_i(d\xi)\right| +
\left|\int_{\xi\in\Xi}p_{j(\epsilon)}(\xi)P(d\xi) - \int_{\xi\in\Xi}h(\bar{x},\xi)P(d\xi)\right|\leq\\
&\leq \int_{\xi\in\Xi}\left|h(\bar{x},\xi) - p_{j(\epsilon)}(\xi)\right|\bar{Q}_i(d\xi) +
\int_{\xi\in\Xi}\left|h(\bar{x},\xi) - p_{j(\epsilon)}(\xi)\right|P(d\xi) < 2\epsilon,
\end{split}
\end{equation*}
using the triangle inequality, \eqref{eq:bize}, \eqref{eq:ize}, and the triangle inequality again.
From the inequality between the left- and the right-hand side it immediately follows that $\lim_{i\to\infty}\bar{z}_i = z_{SP}$, as claimed.
\end{proof}

\bibliographystyle{abbrvnat}
\bibliography{SICP_cutting}
\end{document}